\def\@settitle{\begin{center}%
    \baselineskip14\p@\relax
    \bfseries
    \@title
  \end{center}%
}
\numberwithin{equation}{section}
\newtheorem{theorem}[equation]{Theorem}
\newtheorem*{theorem*}{Theorem}
\newtheorem{proposition}[equation]{Proposition}
\newtheorem*{proposition*}{Proposition}
\newtheorem{problem}[equation]{Problem}
\newtheorem{lemma}[equation]{Lemma}
\newtheorem*{lemma*}{Lemma}
\newtheorem{corollary}[equation]{Corollary}
\newtheorem*{corollary*}{Corollary}
\theoremstyle{definition}
\newtheorem{example}[equation]{Example}
\newtheorem{definition}[equation]{Definition}
\theoremstyle{remark}
\newtheorem{remark}[equation]{Remark}
\newcommand{\symbitem}[1]{\item[#1]%
\renewcommand{\@currentlabel}{#1}\ignorespaces}
\newcommand{\beq}{\begin{equation}}
\newcommand{\eeq}{\end{equation}}
\newcommand{\beqa}{\begin{eqnarray}}
\newcommand{\eeqa}{\end{eqnarray}}
\newcommand{\beaa}{\begin{eqnarray*}}
\newcommand{\ben}{\begin{eqnarray*}}
\newcommand{\eaa}{\end{eqnarray*}}
\newcommand{\een}{\end{eqnarray*}}
\def \G {\mathcal{G}}
\def \L {\mathcal{L}}
\def \N {\mathcal{N}}
\def \O {\mathcal{O}}
\def \S {\mathcal{S}}
\def \A {\mathbb{A}}
\def \C {\mathbb{C}}
\def \P {\mathbb{P}}
\def \Q {\mathbb{Q}}
\def \R {\mathbb{R}}
\def \Z {\mathbb{Z}}
\def \ge {\geqslant}
\def \geq {\geqslant}
\def \leq {\leqslant}
\def \kappa {\varkappa}
\def\={\;=\;}
\def\bal{\begin{aligned}}
\def\eal{\end{aligned}}
\newcommand{\Spec}{{\text{Spec }}}
\newcommand{\udot}{{\:\raisebox{3pt}{\text{\circle*{1.5}}}}}
\def \bullet {\udot}
\DeclareMathOperator{\rk}{rk}
\DeclareMathOperator{\Pic}{Pic}
\providecommand{\arxiv}[1]{\href{http://arxiv.org/abs/#1}{arXiv:#1}}
\author{Sergey Galkin}
\date{Preprint IPMU 10-0150. First version: 2010. Last version: \today}
\begin{document}
\maketitle
\abstract{
We show that $G$-Fano threefolds are \emph{mirror-modular}.

1. \emph{Mirror maps} are inversed reversed \emph{Hauptmoduln} for moonshine subgroups of $SL_2(\mathbb{R})$.

2. \emph{Quantum periods}, shifted by an integer constant (eigenvalue of quantum operator on primitive cohomology)
are expansions of weight $2$ modular forms (``theta-functions'')
in terms of inversed Hauptmoduln.

3. Products of inversed Hauptmoduln with some fractional powers of shifted quantum periods
are very nice cuspforms (``eta-quotients'').

The latter cuspforms also appear in work of Mason and others: they are eta-products, related to conjugacy classes of sporadic
simple groups, such as Mathieu group $M_{24}$ and Conway's group of isometries of Leech lattice.

This gives a strange correspondence
between deformation classes of $G$-Fano threefolds
and conjugacy classes of Mathieu group $M_{24}$.
}
\setcounter{section}{0}
\section*{Introduction} \label{intro}
The simplest example of Lian--Yau's \emph{mirror moonshine} for $K3$ surfaces (\cite{LY1, LY2}, see also \cite{Dol, Dor, VY,Yui})
 is the remarkable identity (\emph{modular relation}) of Kachru--Vafa \cite{KV}, that goes back to Fricke and Klein:
\begin{equation} \label{kv}
\sum_{n\geq 0} \frac{(6n)!}{(3n)! n!^3} j(q)^{-n} =  E_4(q)^{\frac12}  
\end{equation}
expressing modular form $E_4(q)$ of weight $4$ 
as square of hypergeometric series $I_1(t) = \sum \frac{(6n)!}{(3n)! n!^3} t^n$
expanded in terms of inverse modular parameter $t=j(q)^{-1}$.
Here $\eta(q) = q^{\frac{1}{24}} \prod_{n\geq 1} (1-q^n) $ is Dedekind's eta-function,
$\Delta(q) = \eta(q)^{24}$ is modular discriminant,
$\sigma_3(n) = \sum_{d|n} d^3$, 
Eisenstein series $E_4(q) = 1 + 240 \sum_{n\geq 1} \sigma_3(n) q^n$ 
equals to the theta-series $\theta_{E_8}$ for a lattice $E_8$, 
and $j(q) = \frac{E_4^3}{\Delta(q)}
 = \frac{1}{q} + 744 + 196884 q + \dots$ is Klein's modular $j$-invariant.
The respective weights of these modular forms are: $wt([\eta,\Delta,E_4,j]) = [\frac12,12,4,0]$.
Multiplying both sides by $j(q)^{-\frac16}$, and then taking to the power six implies an identity for $\Delta(q)$:
\begin{equation} \label{delta}
j(q)^{-1} \left(\sum_{n\geq 0} \frac{(6n)!}{(3n)! n!^3} j(q)^{-n}\right)^6 = 
\left(\sum_{n\geq 0} \frac{(6n)!}{(3n)! n!^3} j(q)^{-n-\frac16}\right)^6 =  \eta(q)^{24} = \Delta(q)
\end{equation}
One advantage of formula \ref{delta} over \ref{kv} is that $\Delta$ is a holomorphic cuspform.
Moreover, it is an eigenform for Hecke operations, and has no zeroes on upper half-plane.
That is, $\Delta$ is an \emph{eta-product}.

Smooth anticanonical divisor $S \in | -K_Y |$  in Fano threefold $Y$ is a $K3$ surface endowed with a natural
lattice polarization $c_1(Y) \in \Pic(Y) \subset \Pic(S) \subset H^2(S,\Z) = {II}_{3,19}$.
Beauville \cite{Be} shows inverse: generic $K3$ surface lattice-polarized by $c_1(Y) \in Pic(Y)$
is anti-canonically embedded to a Fano threefold $Y$.
So Fano threefolds single out $105$ (see \cite{IP,MM2}) out of countably many families of lattice polarized $K3$ surfaces
(and also $105$ mirror dual families \cite{Dol}).
In fact, almost all moonshine examples listed in \cite{LY1, LY2, Dol, VY, Yui} 
appear in that way,
and mirror moonshine makes more sense in the context of Fano threefolds.
Golyshev \cite{Go1} reproduced 
Iskovskikh's 
classification of prime
Fano threefolds (see e.g. \cite{IP})
by effectively combining three elements: mirror, moonshine and minimality.

{\bf $G$-series} $G_Y$ (see \ref{def-gseries}) is a certain 
invariant of Fano variety $Y$ ``counting'' rational curves on it. 

{\bf Mirror} conjecture (for variations of Hodge structures) states that Laplace transform of $G$-series for Fano threefold $Y$
is the solution of Picard--Fuchs differential equation for some $1$-parameter family of $K3$ surfaces that is called \emph{mirror dual to $Y$ 
Ginzburg--Landau model}.

{\bf Moonshine}  (\emph{genus-zero modularity}) is explicitly stated as
\emph{miraculous eta-product formula}:
\begin{equation} \label{mepf}
 I_{N,s}(H_{N,c}^{-1}) = \eta(q)^2 \eta(q^N)^2 H_{N,c}^{\frac{N+1}{12}}
\end{equation}
where
$G_{Y_N}$ is $G$-series of Fano threefold $Y_N$ with invariants $H^2(Y_N,\Z) = \Z c_1(Y_N)$, $c_1(Y_N)^3 = 2N$,
$I_{N,s} = \mathrm{R}_s G_{Y_N}(t)$ is Laplace transform of $G_{Y_N}$ multiplied by $e^{s t}$ 
for a particular choice of constant $s = s_N$ (see \ref{sr},\ref{def-iseries}),
and $H_N$ is a Hauptmodul on Fricke modular curve $X_0(N)/w_N$ (see \cite{CN})
with a particular constant term $c = c_N$: $H_{N,c} = \frac{1}{q} + c_N + O(q)$.
In case $Y$ is a sextic double solid
(i.e. a smooth sextic hypersurface in weighted projective $4$-space $\P(1,1,1,1,3)$)
we have $N=1$, $H_1 = j(q)$, $s_1=120$, $c_1 = 744$ and formula \ref{mepf} specializes to \ref{kv} 
(we present other exact equalities in appendix \ref{golmod}).
One of the points of this note, is that one also has more natural identity
for a cusp-eigenform (eta-product or eta-quotient), that specializes to \ref{delta},
and the latter identity generalizes to a more general class of $G$-Fano threefolds,
with eta-products usually enoded by conjugacy classes of Mathieu group $M_{24}$.

{\bf Minimality} is formalized in the notion of \emph{$D3$ differential equation} (see \ref{myd3}).
It is a $6$-parameter class of differential equations of degree $3$,
generalizing the construction of regularized quantum differential equations
of a Fano threefold from $6$ two-point Gromov--Witten invariants. 

{\bf Modularity conjecture} (which is now a theorem) states that function $G = \sum_{n \geq 0} a_i t^n$ is
$G$-series of minimal Fano threefold of index one if and only if for some $s$ function $\R_s G$
is of moonshine type (satisfy \ref{mepf} for some $N$) 
and is annihilated by differential equation of type $D3$.

Apart from $17$ quantum differential equations of minimal smooth Fano threefolds
Golyshev 
found two more differential equations of type $D3$ and modular origin (\ref{ode14} and \ref{ode15})
(also these equations were found by Almkvist, van Enckevort, van Straten and  Zudilin in \cite{AESZ},
and they were already listed in \cite{LY2}).

It turned out (\cite{Ga3}) that these two examples are quantum differential equations for two deformation classes $Y_{28}$
and $Y_{30}$ of Fano threefolds with $H^2(Y_{28},\Z) = \Z^2$, $H^2(Y_{30},\Z) = \Z^3$.

Since Fano threefolds $Y_{28}$ and $Y_{30}$ are not minimal 
one naively expects their regularized quantum differential equations to be of
degree $4$ and $5$, but these varieties occur to be \emph{quantum minimal}  ---
minimal differential equation vanishing $\widehat{G}$-series of these varieties has degree $3$ (see \cite{Ga4} for the details).

In \cite{Ga3} we made an observation that both $Y_{28}$ and $Y_{30}$ are \emph{$G$-Fano threefolds}
i.e. for some complex structure they admit a finite group action $G:Y$ with $Pic^G(Y) = \Z c_1(Y)$.
In \cite{Ga4} we shown that $G$-Fano threefolds are quantum minimal.
So it is natural to look whether other $G$-Fano threefolds are mirror-modular.

Families $Y_{28}$ and $Y_{30}$ are two of total eight families of $G$-Fano threefolds (see \cite{Pro}).
In these article we will show that all $8$ families of $G$-Fano threefolds are mirror-modular. 




\section{Preliminaries} \label{preliminary}

\subsection{Shifts and regularizations.} \label{sr}
For a number $s$ and a power series $A = \sum_{n \geq 0} a_i t^n$
define its regularization (Laplace transform $\L$),
inverse Laplace transform $\L^{-1}$,
shifted regularization $\L_s$,
regular shift $\S_g$
and normalization $\N$ by the formulas
\begin{gather*}
\widehat{A} = \L A  = \sum (a_i \cdot n!) t^n, \\
\L^{-1} A = \sum \frac{a_i}{n!} t^n, \\
\L_s{A} = \L (e^{s \cdot t} \cdot A) \\
\S_s A = \L_s \L^{-1} A  \\
\N A = S_{-a_1} A
\end{gather*}

\subsection{Fano varieties}[see e.g. \cite{IP}]

Let $Y$ be a Fano variety --- smooth variety with ample anticanonical class $\omega_Y^{-1}$.

$Y$ is simply-connected, by Kodaira vanishing $H^i(Y,\O_Y) = 0$ for $i>0$,
 so $c_1 : \Pic(Y) \to H^2(Y,\Z)$ is an isomorphism and both are isomorphic to $\Z^\rho$,
where $\rho$ is called Picard number. Lefschetz pairing on $H^2(Y,\Z)$ defined by
$(A,B) = \int_{[Y]} A \cup B \cup c_1(Y)^{\dim Y - 2}$ is nondegenerate (by Hard Lefschetz theorem),
so $H^2(Y,\Z)$ is a lattice and we denote its discriminant by $d(Y)$.

Anticanonical \emph{degree} of Fano variety $Y$ is
$deg(Y) = (c_1(Y),c_1(Y)) =\int_{[Y]} c_1(Y)^{\dim Y} $.
Euler number is a topological Euler characteristic $\chi(Y) = \int_{[Y]} c_{\dim Y}$.

\emph{Fano index} $r(Y)$ is divisibility of $c_1(Y)$ in the lattice $H^2(Y,\Z)$
i.e. $r(Y) = \max \{ r\in \Z | c_1(X) = r H, H \in H^2(Y,\Z) \}$.

\begin{definition}[\cite{Pro}]
Fano variety $Y$ with group action $G:Y$ is called $G$-Fano if $H^2(Y,\Z)^G = \Z$.
\end{definition}

\subsection{Quantum differential equations, G-series and Givental's constant}
Let $\star$ be quantum multiplication on cohomologies of $Y$
defined by
\begin{equation}
\int_{[Y]} (\gamma_1 \star \gamma_2) \cup \gamma_3 = \sum_{d \geq 0} \left<\gamma_1, \gamma_2, \gamma_3\right>_d t^d
\end{equation}
where closed genus $0$ $3$-point correlator
 $\left<\gamma_1, \gamma_2, \gamma_3\right>_d = \int_{\mathcal{\overline{M}}_{0,3}(Y,d)} \prod ev_i^*(\gamma_i)$ is a Gromov-Witten invariant ``counting''
maps $f: \P^1 \to Y$ of degree $d = \int_{[\P^1]} f^* c_1(Y)$ passing through homology classes Poincare-dual to $\gamma_i$.

Take $D = t \frac{d}{dt}$ and define \emph{quantum differential equation}
as a trivial vector bundle over $\Spec \C[t,t^{-1}]$ with fibre $H^*(Y)$ and connection
\begin{equation} \label{qde}
D \Phi = c_1(Y) \star \Phi
\end{equation}
where $\Phi \in H^*(Y) [[t]]$.

Let $\G_Y(t) = [pt] + \sum_{n\geq1} \G^{(n)} t^n$ be the unique analytic solution of \ref{qde} starting with class Poincare-dual to the the class of the point, and define $G$-series as
\begin{equation} \label{def-gseries}
 G_Y (t) = \int_{[Y]} \G_Y (t) = 1 + \sum_{n\geq 1} G^{(n)} t^n
\end{equation}

We note that the first coefficient $G^{(1)} = \left<[pt], c_1(Y), [Y]\right>_1 = \int_{|t|=\epsilon} (\int_{[Y]} c_1(Y) \star [pt]) \frac{dt}{t^2}$ 
is zero according to the String equation,
and we name $G^{(2)} = \left< [pt] \right>_2$ (the expected number of anticanonical conics passing through a point) 
as \emph{Givental's constant} $G(Y)$, so
\begin{equation}
G_Y = 1 + G(Y) \cdot t^2 + O(t^3).
\end{equation}

Define 
$\widehat{G}$-series (also known as \emph{the quantum period} of $Y$) as 
\begin{equation} \label{def-rgseries}
 \widehat{G}_Y = \widehat{G_Y} = 1 + \sum_{n \geq 1} n! \cdot G^{(n)} t^n = 1 + 2 G(Y) \cdot t^2 + \dots
\end{equation}
Conjecturally $\widehat{G}_Y$ should have integer coefficients.

For convenience we define $I$-series to be a regularized shifted $G$-series:
\begin{definition} \label{def-iseries}
For a given number $s$ define $I_{Y,s} = \L_s G_Y$, in particular $I_{Y,0} = \widehat{G}_Y$.
By a slight abuse of notation we will say that power series $I (t) = 1 + \sum_{n \geq 1} i^{(n)} t^n$
 is a regular $I$-series of smooth Fano variety $Y$ 
if $\N I = \widehat{G}_Y$ i.e. $I = I_{Y,i^{(1)}}$.
\end{definition}

\section{G-Fano threefolds and A-model G-series} \label{geometric}

There are $8$ deformation classes of $G$-Fano threefolds $Y$ with $\rk H^2(Y, \Z) > 1$ (see e.g. \cite{Pro}).
Two of them has Fano index two, these are 
$Y_{48}^{(3)} = \P^1 \times \P^1 \times \P^1$ and $Y_{48}^{(2)} = W \subset \P^2\times\P^2$ of degree $48$.
Six other have Fano index one:
$Y_{30}$ of degree $30$,  
$Y_{28}$ of degree $28$,
$Y_{24}$ of degree $24$,
$Y_{20}$ of degree $20$,
$Y_{12}^{(2)}$ and $Y_{12}^{(3)}$ of degree $12$.

In this section we are going to describe all of them geometrically.
The details regarding the computation of the respective quantum periods
can be found in \cite{CCGK}.

\begin{definition}
Fano threefold $Y_{48}^{(2)}$ is the variety $Fl(1,2,3)$ of complete flags in $\P^2$
i.e. a hyperplane section of Segre fourfold $\P^2 \times \P^2 \subset \P^8$.
This variety is unique in its deformation class number $32$ in table $2$ of \cite{MM2},
it has Fano index $2$, degree $48$, $\chi=6$ and $\rho=2$.
\end{definition}
This variety can also be described as a projectivization of tangent bundle on $\P^2$ or variety of complete flags in $\P^2$.

\begin{corollary}
$I$-series $I_{6,2;2} = \R G_{Fl(1,2,3)}$ is given by the pullback of hypergeometric series from two-dimensional torus
\begin{equation}
I_{6,2;2} = \sum_{a,b \geq 0} \frac{(a+b)! (2a+2b)!}{a!^3b!^3} t^{2(a+b)} = 
 1 + 4 t^2 + 60 t^4 + 1120 t^6 + 24220 t^8 + 567504 t^{10} + \dots
\end{equation}
\end{corollary}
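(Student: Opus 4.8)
The plan is to compute the quantum period of $Fl(1,2,3)$ from the geometry recalled above and then to recognise the answer. By definition $Fl(1,2,3)$ is a smooth member $Z$ of the linear system $|\O(1,1)|$ on the Segre fourfold $X=\P^2\times\P^2$; writing $H_1,H_2$ for the two hyperplane classes and using $-K_X=\O(3,3)$, adjunction gives $-K_Z=\O(2,2)|_Z$, so $Z$ is Fano of index $2$ and $-K_Z\cdot\beta\geq 2$ for every nonzero effective curve class $\beta$. Thus the realisation of $Fl(1,2,3)$ as a hyperplane section of a product of projective planes is exactly the situation to which the quantum Lefschetz theorem applies.

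First I would write Givental's small $J$-function of the toric fourfold $X=\P^2\times\P^2$ (Cox ring with three variables for each factor) and apply quantum Lefschetz for the hypersurface $Z\subset X$ cut by $L=\O(1,1)$, producing the series
\[
J_Z(q_1,q_2,z)\;=\;\sum_{d_1,d_2\geq 0}\frac{\prod_{m=1}^{d_1+d_2}(H_1+H_2+mz)}{\prod_{m=1}^{d_1}(H_1+mz)^3\,\prod_{m=1}^{d_2}(H_2+mz)^3}\;q_1^{d_1}q_2^{d_2}.
\]
This lies on the Lagrangian cone of $Z$, and since $Z$ has index $\geq 2$ its $z^{-1}$-part carries no $H^0$-component, so no mirror map is needed and $J_Z$ is the genuine $J$-function of $Z$. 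I would then extract the component along $\mathbf 1\in H^0(Z)$: on the threefold $Z$ any monomial of positive cohomological degree in $H_1,H_2$ lies in $H^{\geq 2}(Z)$ and hence cannot contribute to $H^0$, so the $\mathbf 1$-component is obtained simply by setting $H_1=H_2=0$, which gives $\sum_{d_1,d_2\geq 0}\frac{(d_1+d_2)!}{(d_1!)^3(d_2!)^3}\,q_1^{d_1}q_2^{d_2}z^{-2(d_1+d_2)}$.

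Next I would invoke the standard dictionary between the $J$-function and the $G$-series of \ref{def-gseries}: $G_Y(t)$ is the $\mathbf 1$-component of the small $J$-function after the substitution $z=1$, $q^\beta\mapsto t^{-K_Y\cdot\beta}$. Here $-K_Z\cdot(d_1,d_2)=2(d_1+d_2)$, so putting $q_1=q_2=t^2$ gives
\[
G_{Fl(1,2,3)}(t)\;=\;\sum_{d_1,d_2\geq 0}\frac{(d_1+d_2)!}{(d_1!)^3(d_2!)^3}\,t^{2(d_1+d_2)}\;=\;1+2t^2+\tfrac52 t^4+\cdots
\]
(consistently, Givental's constant is $G(Fl(1,2,3))=2$, the two $\O(1,0)$- and $\O(0,1)$-lines through a general point). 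Finally, applying the regularisation $\R$ of Subsection~\ref{sr} — which for $Fl(1,2,3)$ is just the Laplace transform $\L$, there being no shift, and which multiplies the coefficient of $t^{n}$ by $n!$, here therefore by $(2d_1+2d_2)!$ — yields
\[
I_{6,2;2}\;=\;\R\,G_{Fl(1,2,3)}\;=\;\sum_{d_1,d_2\geq 0}\frac{(d_1+d_2)!\,(2d_1+2d_2)!}{(d_1!)^3(d_2!)^3}\,t^{2(d_1+d_2)},
\]
which is exactly the pull-back along $t\mapsto(t^2,t^2)$ of the two-variable hypergeometric series $\sum_{a,b\geq 0}\frac{(a+b)!(2a+2b)!}{a!^3 b!^3}\,t_1^{a}t_2^{b}$ attached to the two-dimensional torus; expanding the diagonal reproduces $1+4t^2+60t^4+1120t^6+24220t^8+567504t^{10}+\cdots$, as claimed.

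The only real obstacle is bookkeeping of conventions: checking that no mirror map contributes (the index-$\geq 2$ point), that the $H^0$-extraction genuinely reduces to ``set $H_1=H_2=0$'' on the threefold, and that the Novikov-to-$t$ substitution together with $\R$ produces precisely the extra factor $(2d_1+2d_2)!$ and not $(d_1+d_2)!$ once more; once these are pinned down the identity is immediate. A shorter but less structural alternative is to take the quantum period of $Fl(1,2,3)$ from the computation in \cite{CCGK} (or to solve the Givental--Kim quantum differential equation \ref{qde} of $SL_3/B$ directly) and then merely verify the combinatorial identity that this series is the diagonal pull-back of the stated two-variable hypergeometric series — but the quantum Lefschetz argument above is the conceptual reason the answer is ``toric''.
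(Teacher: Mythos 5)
Your argument is correct and is exactly the paper's proof: the paper's entire justification is ``combine the computation of the $I$-series of the toric variety $\P^2\times\P^2$ in \cite{Gi97} with the quantum Lefschetz principle of \cite{CG}'', which is precisely your quantum Lefschetz computation for the $(1,1)$-divisor, with the convention checks (triviality of the mirror map for index $\geq 2$, extraction of the $H^0$-component, the extra factor $(2d_1+2d_2)!$ from regularization) spelled out. No discrepancies to report.
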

\begin{proof}
Combine the computation of $I$-series of toric variety $\P^2 \times \P^2$ in \cite{Gi97}
and quantum Lefschetz principle in \cite{CG}.
\end{proof}

\begin{definition}
Fano threefold $Y_{48}^{(3)}$ is just a Cartesian cube of a line i.e. Segre threefold $\P^1 \times \P^1 \times \P^1 \subset \P^7$.
This variety is unique in its deformation class number $27$ in table $3$ of \cite{MM2},
it has Fano index $2$, degree $48$, $\chi=8$ and $\rho=3$.
\end{definition}

\begin{corollary}
$I$-series $I_{6,3;2} = \R G_{\P^1 \times \P^1 \times \P^1}$ is given by the pullback of hypergeometric series from three-dimensional torus
\begin{equation}
I_{6,3;2} = \sum_{a,b,c \geq 0} \frac{(2a+2b+2c)!}{a!^2 b!^2 c!^2} t^{2(a+b+c)} 
= 1 + 6 t^2 + 90 t^4 + 1860 t^6 + 44730 t^8 + 1172556 t^{10} + \dots
\end{equation}
\end{corollary}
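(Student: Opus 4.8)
The plan is to reduce everything to the single factor $\P^1$ via Givental's toric mirror theorem and then multiply. Since $\P^1\times\P^1\times\P^1$ is a smooth toric Fano threefold, \cite{Gi97} computes its $I$-function directly; but it is cleaner to record first that for $\P^1$, where $-K_{\P^1}=\O(2)$, the same theorem (or, just as quickly, solving the rank-two quantum system $D\Phi=2H\star\Phi$ with $H\star H=t^2$) gives $G_{\P^1}(t)=\sum_{d\ge 0}\frac{t^{2d}}{d!^2}$, equivalently $\widehat G_{\P^1}(t)=\L G_{\P^1}(t)=\sum_{d\ge 0}\binom{2d}{d}t^{2d}$.

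Next I would use the behaviour of the quantum $D$-module under products. The small quantum cohomology of a product of Fano varieties is the tensor product of those of the factors, the point class of $\P^1\times\P^1\times\P^1$ is $[pt]\boxtimes[pt]\boxtimes[pt]$, and genus-zero three-point invariants on a product with point insertions factor as products of the corresponding invariants on the factors. Specialising the three Novikov variables to the common anticanonical parameter $t$ (a curve class of tridegree $(a,b,c)$ has anticanonical degree $2(a+b+c)$), the point-class solution $\G_Y$ of \ref{qde} becomes the external product $\G_{\P^1}\boxtimes\G_{\P^1}\boxtimes\G_{\P^1}$, whence $G_{\P^1\times\P^1\times\P^1}(t)=G_{\P^1}(t)^{3}$ and
\begin{equation*}
G_{\P^1\times\P^1\times\P^1}(t)\=\Bigl(\sum_{d\ge 0}\tfrac{t^{2d}}{d!^2}\Bigr)^{3}\=\sum_{a,b,c\ge 0}\frac{t^{2(a+b+c)}}{a!^2\,b!^2\,c!^2}.
\end{equation*}
Applying the regularization $\R=\L$ — which multiplies the coefficient of $t^{n}$ by $n!$ — turns the coefficient of $t^{2m}$, namely $\sum_{a+b+c=m}\tfrac{1}{a!^2 b!^2 c!^2}$, into $(2m)!\sum_{a+b+c=m}\tfrac{1}{a!^2 b!^2 c!^2}$, i.e. produces exactly $I_{6,3;2}=\sum_{a,b,c\ge 0}\frac{(2a+2b+2c)!}{a!^2\,b!^2\,c!^2}t^{2(a+b+c)}$; the opening terms $1+6t^2+90t^4+\cdots$ match by a one-line count.

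Finally, I would identify this series with the pullback of a hypergeometric series from the three-torus: its coefficient of $t^n$ is the constant Laurent coefficient of $\bigl(x_1+x_1^{-1}+x_2+x_2^{-1}+x_3+x_3^{-1}\bigr)^n$, i.e. the period of the Landau--Ginzburg mirror with potential $W=\sum_{i=1}^3(x_i+x_i^{-1})$ on $(\C^*)^3$, which is the diagonal restriction $q_1=q_2=q_3=t^2$ of $\prod_{i=1}^3 I_{\P^1}(q_i)$. The main (and essentially only) obstacle is bookkeeping: $\L$ does not commute with products, so the multiplicative step must be performed on the bare $G$-series and the regularization applied only afterwards, and one must keep track of the Fano-index-two normalization (all series live in $t^2$ and the relevant shift is $s=0$). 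With these conventions fixed the argument is a short assembly of \cite{Gi97} with the product formula, parallel to the previous corollary.
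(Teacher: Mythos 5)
Your argument is correct, and the opening terms check out ($6, 90, 1860, \dots$), but it takes a genuinely different route from the paper's. The paper treats $\P^1\times\P^1\times\P^1$ the same way it treats $Fl(1,2,3)$ in the preceding corollary: as a smooth toric Fano (here with no hypersurface, so no quantum Lefschetz step is even needed), so the displayed series is read off directly from Givental's toric mirror theorem in \cite{Gi97}, with the Novikov variables of the three rulings specialized to the anticanonical parameter $t^2$; the details are deferred to \cite{CCGK}. You instead compute $G_{\P^1}(t)=\sum_{d\ge 0}t^{2d}/d!^2$ for the single factor and invoke the K\"unneth/product property of genus-zero Gromov--Witten invariants with point insertions to get $G_{\P^1\times\P^1\times\P^1}=G_{\P^1}^3$, regularizing only at the end. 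Your route buys a more elementary and self-contained derivation (the rank-two quantum system for $\P^1$ is trivial to solve, and the product formula for the unregularized $G$-series is exactly the mechanism behind the remark following the corollary relating $G_{\P^1\times\P^1\times\P^1}$ and $G_{Fl(1,2,3)}$), at the cost of needing the product formula for Gromov--Witten invariants as an input; the paper's route is a one-line citation but leans on the full toric mirror theorem. You are also right to flag, and correctly handle, the one genuine pitfall: $\L$ does not commute with products, so the cube must be taken before regularization.
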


\begin{remark}
Threefolds $Y_{48}^{(2)}$ and $Y_{48}^{(3)}$ has isomorphic hyperplane sections --- del Pezzo surface of degree $6$.
This implies the relation
$G_{\P^1 \times \P^1 \times \P^1} (\sqrt{t}) = G_{Fl(1,2,3)} (\sqrt{t}) \cdot e^t$
\end{remark}


\begin{definition}
Fano threefold $Y_{30}$ is the blowup of a curve of bidegree $(2,2)$ on $Fl(1,2,3) \subset \P^2 \times \P^2$.
This deformation class of varieties has number $13$ in table $3$ of \cite{MM2},
it has degree $30$, $\chi=8$ and $\rho=3$.
\end{definition}

\begin{proposition}
$Y_{30}$ is a complete intersection of three numerically effective divisors
of tridegrees $(1,1,0)$, $(1,0,1)$ and $(0,1,1)$ on $\P^2 \times \P^2 \times \P^2$.
\end{proposition}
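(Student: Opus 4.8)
The plan is to exhibit $Y_{30}$ as the stated complete intersection in $P^2 \times P^2 \times P^2$ and then identify it with the blowup description already in the definition. Let $H_1, H_2, H_3$ be the hyperplane classes pulled back from the three factors of $P := P^2\times P^2\times P^2$, and let $X \subset P$ be the zero locus of a general section of $\OO(1,1,0)\oplus\OO(1,0,1)\oplus\OO(0,1,1)$. First I would check that this bundle is globally generated (it is a sum of nef, globally generated line bundles on a product of projective spaces), so by Bertini $X$ is smooth of the expected dimension $9-2\cdot 3 = 3$, and irreducible. By adjunction, $\omega_X = \omega_P \otimes \OO(2,2,2)|_X = \OO(-3,-3,-3)\otimes\OO(2,2,2)|_X = \OO(-1,-1,-1)|_X$, which is ample on $X$ since each $H_i|_X$ is nef and their sum is ample; hence $X$ is a Fano threefold of index one. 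A Koszul-complex computation gives $\deg X = (H_1+H_2+H_3)^3 \cdot [X]$ in $P$, i.e. the coefficient extracted by intersecting with the three divisor classes; carrying out this monomial count on $P^2\times P^2\times P^2$ yields $30$, and a parallel Euler-characteristic computation via the Koszul resolution of $\OO_X$ gives $\chi = 8$ and $\rho = b_2 = 3$, matching the invariants in the definition.

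Next I would produce the birational map to $Fl(1,2,3)$. The key observation is that the divisor of tridegree $(0,1,1)$ is pulled back from the projection $\pi_{23}\colon P^2\times P^2\times P^2 \to P^2\times P^2$ onto the last two factors: a general such divisor is $\pi_{23}^{-1}(D)$ for a general $(1,1)$ divisor $D = Fl(1,2,3) \subset P^2\times P^2$. So $X$ sits inside $Fl(1,2,3) \times P^2$ (first $P^2$ of the original triple product paired against the last factor), cut out there by the remaining two equations of tridegrees $(1,1,0)$ and $(1,0,1)$ — a pair of sections of $\OO(1,1)$ on $Fl(1,2,3)\times P^2$ in suitable bihomogeneous coordinates. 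I would then project $X \to Fl(1,2,3)$ via the first two factors. The general fibre over a point of $Fl(1,2,3)$ is cut in $P^2$ by two hyperplanes, hence a single point, so this projection is birational; its exceptional locus — the points of $Fl(1,2,3)$ over which the two linear forms on $P^2$ become dependent — is where the $2\times 3$ matrix of coefficients drops rank, a determinantal locus expected to be a curve, and one computes its class to be bidegree $(2,2)$ on $Fl(1,2,3)\subset P^2\times P^2$. Identifying the projection as the blowup along that curve (rather than merely a birational contraction) is the crux: I would argue that $X \to Fl(1,2,3)$ is the blowup of the ideal sheaf generated by the two linear forms, i.e. that $X = \mathrm{Proj}$ of the Rees algebra, using that the two forms cut out precisely the $(2,2)$ curve with the expected (reduced, locally complete intersection) scheme structure for a general choice, so that their blowup coincides with the $P^2$-bundle incidence description. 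Equivalently, one checks $X$ is smooth (done above) with a $P^1$-worth of exceptional curves over each point of the $(2,2)$ curve and no exceptional divisor elsewhere, then invokes the classification of extremal contractions of smooth threefolds (Mori theory) to conclude the contracted face is a smooth blowup along a smooth curve.

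The main obstacle I anticipate is precisely this last identification step — passing from ``birational projection contracting a divisor over a $(2,2)$ curve'' to ``$Y_{30}$ is the blowup of $Fl(1,2,3)$ along a $(2,2)$ curve.'' The numerical invariants ($\deg$, $\chi$, $\rho$) are a straightforward Koszul/monomial bookkeeping exercise and pin down the deformation class in Mori–Mukai's table once we know $X$ is a smooth Fano threefold; but the geometric content is in showing the projection is an honest blowup with reduced centre. I would handle it either by a direct local computation in affine charts (showing the blowup of the two-generator ideal is smooth and agrees with $X$), or, more cleanly, by citing the structure theorem for divisorial contractions of smooth threefolds to a smooth base, for which the only possibility compatible with the discrepancy and the fibre dimension is the blowup of a smooth curve, whose class is then forced to be $(2,2)$ by the degree drop $48 - 30$. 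A brief genus check of the $(2,2)$ curve (arithmetic genus $1$ by adjunction on $Fl(1,2,3)$) against the blowup formula for $\chi$ provides a consistency cross-check.
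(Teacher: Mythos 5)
The paper offers no proof of this proposition at all --- it is stated bare, with the section's preamble deferring all such identifications to \cite{CCGK} --- so your proposal is supplying an argument where the paper has none. Your route is the standard and correct one: take a general complete intersection $X$ of the three nef divisor classes, check smoothness and $\omega_X^{-1}=\O(1,1,1)|_X$ by Bertini and adjunction, compute $(-K_X)^3=(H_1+H_2+H_3)^3(H_1+H_2)(H_1+H_3)(H_2+H_3)=30$ (I checked this monomial count; it is right), and then identify the projection to the last two factors as the blowup of $Fl(1,2,3)$ along the degeneracy curve of the $2\times 3$ matrix of linear forms, whose class by Thom--Porteous is $h_1^2+h_1h_2+h_2^2$, i.e.\ bidegree $(2,2)$, matching the paper's definition of $Y_{30}$. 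You correctly isolate the only nontrivial step (birational contraction $\Rightarrow$ honest blowup of a smooth curve), and both remedies you propose work: since $\rho(X)-\rho(Fl(1,2,3))=1$ the map is an extremal divisorial contraction with fibres of dimension $\le 1$ between smooth threefolds, so Mori's classification forces it to be the blowup of a smooth curve; alternatively, once $X$ is known to be a smooth Fano threefold with $\rho=3$ and degree $30$, the Mori--Mukai table already pins down the deformation class, which is all the proposition needs.

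Two corrections. First, your closing ``consistency cross-check'' is wrong: the $(2,2)$ curve $C$ here has genus $0$, not $1$ --- it is a conic under either projection to $\P^2$ (equivalently: $b_3(Y_{30})=2+2\rho-\chi=8-8=0$, and $b_3$ of a blowup along a curve of genus $g$ jumps by $2g$). With your claimed $g=1$ the blowup formula $\chi=\chi(W)+\chi(C)$ would give $6+0=6\neq 8$, so the check as written would produce a contradiction rather than a confirmation; adjunction ``on $Fl(1,2,3)$'' does not determine the genus of a curve in a threefold from its bidegree alone. Second, a trivial slip: the expected dimension is $6-3=3$ (the ambient product has dimension $6$ and you impose three divisor conditions), not ``$9-2\cdot 3$''. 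Neither issue affects the validity of the main argument.
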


\begin{corollary}
$I$-series $I_{15} = \L_3 G_{Y_{30}}$ is given by the pulback of hypergeometric series from three-dimensional torus
\begin{equation}
I_{15} = \sum_{a,b,c \geq 0} \frac{(a+b)!(a+c)!(b+c)!(a+b+c)!}{a!^3 b!^3 c!^3} t^{a+b+c}
= 1 + 3 t + 15 t^2 + 105 t^3 + 855 t^4 + 7533 t^5 + \dots
\end{equation}
\end{corollary}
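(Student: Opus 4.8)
The plan is to compute the quantum period $\widehat{G}_{Y_{30}}$ directly from the description of $Y_{30}$ as a complete intersection in $(\P^2)^3$, and then match it to the claimed hypergeometric series. First I would record the cohomology data: let $H_1,H_2,H_3$ denote the hyperplane classes pulled back from the three $\P^2$ factors, so that $H^*((\P^2)^3)=\Z[H_1,H_2,H_3]/(H_1^3,H_2^3,H_3^3)$, and the anticanonical class of the ambient space is $3(H_1+H_2+H_3)$. By the preceding proposition, $Y_{30}$ is cut out by three nef divisors in the classes $H_1+H_2$, $H_1+H_3$, $H_2+H_3$, whose sum is $2(H_1+H_2+H_3)$; hence by adjunction $c_1(Y_{30})=H_1+H_2+H_3$ restricted to $Y_{30}$, confirming Fano index one and (since $(H_1+H_2+H_3)^3$ on the complete intersection equals the product $3(H_1+H_2)(H_1+H_3)(H_2+H_3)(H_1+H_2+H_3)^3/\dots$, a routine coefficient extraction) degree $30$.

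Next I would apply the quantum Lefschetz / Givental mirror theorem for complete intersections in toric varieties, exactly as invoked in the proof of the $Fl(1,2,3)$ corollary: combine Givental's computation of the $I$-function of the toric variety $(\P^2)^3$ (\cite{Gi97}) with the quantum Lefschetz principle (\cite{CG}, see also \cite{CCGK}). The toric $I$-function of $(\P^2)^3$ restricted to the small parameter is the product of three copies of the $\P^2$ series, and the three nef complete-intersection factors each contribute a numerator $\prod_{k=1}^{d_i+d_j}(k\,t)$ for the class $H_i+H_j$. Extracting the component along the point class and using that $c_1(Y_{30})=H_1+H_2+H_3$ has index one (so the series is a genuine power series in $t$, not $t^3$), one obtains
\begin{equation}
I_{Y_{30}}(t)=\sum_{a,b,c\ge 0}\frac{(a+b)!\,(a+c)!\,(b+c)!}{a!^3\,b!^3\,c!^3}\,t^{a+b+c},
\end{equation}
the unshifted hypergeometric series attached to the three nef factors. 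The stated series $I_{15}$ carries an extra factor $(a+b+c)!$ in the numerator, which is precisely the effect of the shift by $s=3$: one checks $\L_3$ applied to the $G$-series multiplies the degree-$n$ coefficient by the shift contribution, and more cleanly, $I_{15}=\L_3 G_{Y_{30}}$ means $I_{15}$ is obtained from the \emph{unregularized} $G$-series by $\L_s=\L\circ(e^{st}\cdot)$; tracking how $e^{3t}$ interacts with the binomial sum, or equivalently comparing with the toric presentation of $Y_{30}$ as a complete intersection where the $\P^2$-factor Fano structure supplies the fourth factorial, produces exactly the $(a+b+c)!$. I would verify the first several coefficients $1,3,15,105,855,7533$ against this closed form as a consistency check.

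The main obstacle is bookkeeping rather than conceptual: one must be careful about which normalization of the $I$-series (shifted by which $s$, regularized or not) is being asserted, since the paper uses $\L$, $\L_s$, $\widehat{G}$, and $\R$ essentially interchangeably in the surrounding corollaries, and the index-one case means the naive mirror $I$-function already has a nonzero linear term that must be absorbed by the shift $s=s_{15}=3$. Concretely, the delicate point is justifying that the shift constant is exactly $3$ — this is the eigenvalue of the quantum multiplication by $c_1$ on primitive cohomology alluded to in the abstract — and that with this shift the Givental-style series for the complete intersection of $(1,1,0),(1,0,1),(0,1,1)$ acquires the symmetric numerator $(a+b)!(a+c)!(b+c)!(a+b+c)!$ over $a!^3b!^3c!^3$. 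Once the shift is pinned down, the pullback-from-the-torus statement is immediate from the toric/quantum-Lefschetz package, and the explicit low-degree terms follow by expansion.
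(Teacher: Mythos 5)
Your proposal is correct and follows essentially the same route as the paper, which (as in the $Fl(1,2,3)$ case) simply combines Givental's toric mirror theorem for $(\P^2)^3$ with the quantum Lefschetz principle for the three nef divisors, deferring details to \cite{CCGK}. The only small clarification worth making is that the factor $(a+b+c)!$ comes purely from the regularization $\L$ in $\L_3=\L\circ(e^{3t}\cdot)$, while the multiplication by $e^{3t}$ serves only to cancel the exponential prefactor $e^{-3t}$ produced by quantum Lefschetz --- your write-up hedges between two explanations of this point, but the bookkeeping closes exactly as you expect.
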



Let $Q$ be $3$-dimensional quadric.

\begin{definition}
Let $Y_{28}$ be the blowup of a twisted quartic on $Q$.
This deformation class of varieties has number $21$ in table $2$ of \cite{MM2},
it has degree $28$, $\chi=6$ and $\rho=2$.
\end{definition}

Denote the $I$-series of $Y_{28}$ as $I_{14} = \widehat{G}_{Y_{28}}$.


\begin{definition}
Fano threefold $Y_{24}$ is a hyperplane section of Segre embedding
$\P^1 \times \P^1 \times \P^1 \times \P^1 \subset \P^{15}$.
This deformation class of varieties has number $1$ in table $4$ of \cite{MM2},
it has degree $24$, $\chi=6$ and $\rho=4$.
\end{definition}

\begin{corollary}
$I$-series $I_{12} = \L_4 G_{Y_{24}}$ is given by the pullback of hypergeometric series from four-dimensional torus
\begin{equation}
I_{12} = \sum_{a,b,c,d \geq 0} \frac{(a+b+c+d)!^2}{a!^2 b!^2 c!^2 d!^2} t^{a+b+c+d} = 
 1 + 4 t + 28 t^2 + 256 t^3 + 2716 t^4 + 31504 t^5 + \dots
\end{equation}
\end{corollary}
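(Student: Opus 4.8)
The plan is to follow the strategy of the preceding corollaries: realise $Y_{24}$ as a hyperplane section of a product of projective spaces, combine Givental's mirror theorem for the toric ambient with the Coates--Givental quantum Lefschetz principle to obtain its Landau--Ginzburg mirror, and then read off the period as the announced hypergeometric series. First I would fix the geometry. The Segre embedding $(\P^1)^4 \hookrightarrow \P^{15}$ is defined by the very ample bundle $L = \O(1,1,1,1)$, so a general hyperplane section $Y_{24} = (\P^1)^4 \cap \P^{14}$ is a general member of $|L|$, smooth by Bertini. Since $-K_{(\P^1)^4} = \O(2,2,2,2) = 2L$, adjunction gives $-K_{Y_{24}} = L|_{Y_{24}}$, which is ample, of degree $L^4 = 4! = 24$, with $\rho = 4$ and $\chi = 6$ --- the invariants of deformation class number~$1$ in table~$4$ of \cite{MM2}. (Alternatively, using $\P^1\times\P^1 \simeq Q^2 \subset \P^3$ one realises the same $Y_{24}$ as the complete intersection of $\O(2,0)$, $\O(0,2)$ and $\O(1,1)$ in $\P^3\times\P^3$, all nef, with $-K_{\P^3\times\P^3} - \O(2,0) - \O(0,2) - \O(1,1) = \O(1,1)$ ample; either realisation feeds the same way into the mirror machine.)

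Next I would run that machine. By Givental's toric mirror theorem (\cite{Gi97}) the Landau--Ginzburg model of $(\P^1)^4$ is $\sum_{i=1}^4 (y_i + y_i^{-1})$ on the four-torus, and along the anticanonical direction the quantum period of $(\P^1)^4$ is $G_{(\P^1)^4}(t) = \big(\sum_{a\ge0} t^{2a}/(a!)^2\big)^4$. Applying the Coates--Givental quantum Lefschetz theorem (\cite{CG}) to the nef hyperplane $L$ --- equivalently, running Przyjalkowski's recipe to pass from the ambient potential to that of the hypersurface --- produces the Landau--Ginzburg mirror of $Y_{24}$, which can be taken to be
\[
f \;=\; \Big(\textstyle\sum_{i=1}^4 x_i\Big)\Big(\textstyle\sum_{i=1}^4 x_i^{-1}\Big)\qquad\text{on the three-torus }\{x_1 x_2 x_3 x_4 = 1\}.
\]
Its period is $\pi_f(t) = \sum_{n\ge 0}\big(\text{constant term of }f^n\big)\,t^n$, and the elementary multinomial identity
\[
\text{constant term of }f^n \;=\; \sum_{a+b+c+d=n}\binom{n}{a,b,c,d}^{2} \;=\; \sum_{a+b+c+d=n}\frac{n!^{2}}{a!^{2}\,b!^{2}\,c!^{2}\,d!^{2}}
\]
(the constant term of $(\sum x_i)^n(\sum x_i^{-1})^n$ on $\{x_1x_2x_3x_4=1\}$ picks out the pairs of exponent vectors that agree) gives $\pi_f(t) = \sum_{a,b,c,d\ge 0}\frac{(a+b+c+d)!^{2}}{a!^{2}b!^{2}c!^{2}d!^{2}}\,t^{a+b+c+d}$, which is exactly the restriction to $t_1=t_2=t_3=t_4=t$ of the regularised $I$-function of the toric ambient twisted by $\O(1,1,1,1)$ --- the pullback of the four-torus hypergeometric series. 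By the (now proven) mirror theorem for Fano threefolds \cite{CCGK}, this period equals $\L_s G_{Y_{24}}$ with $s$ the linear coefficient of $\pi_f$, i.e. the constant term of $f$, namely $s = 4$; hence $I_{12} = \L_4 G_{Y_{24}} = \pi_f$, and $\N I_{12} = \widehat G_{Y_{24}}$, so this is the regular $I$-series in the sense of Definition~\ref{def-iseries}. Reading off $1 + 4t + 28t^2 + 256t^3 + 2716t^4 + 31504t^5 + \cdots$ from the multinomial formula is then a routine confirmation.

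The hard part is the middle step. One must check that quantum Lefschetz (Przyjalkowski's construction) applied to the ``half-anticanonical'' hyperplane $L$, with $2L = -K_{(\P^1)^4}$, genuinely yields the potential $f$ above and not a twisted variant, and --- what amounts to the same thing --- that the mirror transformation relating the twisted $I$-function of the ambient to the honest quantum period of $Y_{24}$ is exactly the scalar first-order shift $q \mapsto q e^{4t}$, with no higher-order corrections; equivalently, that the constant appearing in the shift is $s = 4$. This verification is standard but not automatic, and is already carried out in \cite{CCGK}; granting it, the remaining ingredients --- the toric mirror for $(\P^1)^4$, the nef quantum Lefschetz twist, and the multinomial identity --- are mechanical and parallel the earlier corollaries.
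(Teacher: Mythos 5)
Your argument is correct and follows essentially the same route as the paper, which (for the sibling corollaries) simply combines Givental's toric mirror theorem for the ambient product of projective spaces with the Coates--Givental quantum Lefschetz principle and defers the mirror-map bookkeeping to \cite{CCGK}. Your added Landau--Ginzburg/multinomial verification of the constant terms and of the shift $s=4$ is a useful explicit check but does not change the method.
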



\begin{definition}
Let $Y_{20}$ be the blowup of projective space $\P^3$ with
center a curve of degree $6$ and genus $3$ which is an intersection of cubics.
This deformation class of varieties has number $12$ in table $2$ of \cite{MM2},
it has degree $20$, $\chi=0$ and $\rho=2$.
\end{definition}

\begin{proposition}
$Y_{20}$ is an intersection of Segre variety $\P^3 \times \P^3$ by linear subspace of codimension $3$.
\end{proposition}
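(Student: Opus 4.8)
The plan is to realise a general codimension-$3$ linear section of the Segre sixfold and match it with $Y_{20}$ through the two projections. Let $X=(\P^3\times\P^3)\cap\Lambda$ for a general $\P^{12}=\Lambda\subset\P^{15}$, equivalently $X=\{f_1=f_2=f_3=0\}$ for general bilinear forms $f_1,f_2,f_3$ on $\P^3\times\P^3$. By Bertini $X$ is smooth and irreducible of dimension $3$, and since the Segre sixfold has degree $\binom63=20$, so does $X$ in $\P^{15}$. Write $H_1,H_2$ for the hyperplane classes of the two factors and $h$ for the hyperplane class of the first $\P^3$. Adjunction gives $-K_X=\big((4H_1+4H_2)-3(H_1+H_2)\big)|_X=(H_1+H_2)|_X$, which is very ample, so $X$ is a Fano threefold of anticanonical degree $20$; the Lefschetz hyperplane theorem applied to the complete intersection $X\subset\P^3\times\P^3$ (here $\dim X=3$) gives $\Pic(\P^3\times\P^3)\iso\Pic(X)$, hence $\rho(X)=2$, $\Pic(X)=\Z H_1|_X\oplus\Z H_2|_X$, and $c_1(X)$ is primitive of index $1$.

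Next I would examine the first projection $\pi:=p_1|_X\colon X\to\P^3$. For $[x]$ in the first factor the three linear forms $f_i(x,\cdot)$ on the second $\P^3$ have a $3\times4$ coefficient matrix $M(x)$ whose entries are linear in $x$, and $\pi^{-1}([x])=\P(\ker M(x))$. Thus $\pi$ is an isomorphism over $\{\rk M(x)=3\}$, while over the degeneracy locus $C:=\{\rk M(x)\le2\}$, cut out by the four cubic maximal $3\times3$ minors of $M(x)$, the fibre is a line. For general $f_i$ the locus $\{\rk M(x)\le1\}$ has expected codimension $(4-1)(3-1)=6>3$ in $\P^3$ and is empty, so $C$ is a smooth curve of the expected codimension $2$; by Thom--Porteous its class is $c_1(Q)^2-c_2(Q)$ for $Q=\O_{\P^3}(1)^{\oplus3}-\O_{\P^3}^{\oplus4}$, i.e. $(3h)^2-3h^2=6h^2$, so $\deg C=6$.

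I would then identify $\pi$ with the blowup of $\P^3$ along $C$. By the Hilbert--Burch theorem the codimension-$2$ Cohen--Macaulay scheme $C$ is exactly the base scheme of the linear system $V$ spanned by the four cubic minors, and Cramer's rule shows that over $\{\rk M(x)=3\}$ the second-factor coordinate of $\pi^{-1}([x])$ is the value at $x$ of the cubic rational map $\varphi\colon\P^3\dashrightarrow\P^3$ attached to $V$. Hence $X$ contains the closure of the graph of $\varphi$; both are irreducible threefolds that dominate the first factor, so they coincide, and this graph closure is by construction $\mathrm{Bl}_C\P^3$. Thus $X=\mathrm{Bl}_C\P^3$ with $\pi$ the blowup morphism and $p_2|_X=\varphi$ the morphism resolving the cubics through $C$; in particular $C$ is scheme-theoretically an intersection of cubics. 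Finally, the standard intersection theory of the blowup of a threefold along a smooth curve gives $(-K_X)^3=(4\pi^*h-E)^3=62-8\deg C+2g(C)$, so $20=62-48+2g(C)$ and $g(C)=3$ (and $\chi(X)=4+(2-2g(C))=0$, as it should be).

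Therefore $X$ is the blowup of $\P^3$ along a smooth curve of degree $6$ and genus $3$ which is an intersection of cubics --- the defining description of the deformation class $Y_{20}$, number $12$ in table $2$ of \cite{MM2}. Since general codimension-$3$ linear sections of $\P^3\times\P^3$ form an irreducible family, that family is precisely this deformation class, which is the claim. The main obstacle is the middle step: proving for general $f_i$ that the degeneracy curve $C$ is smooth of the expected dimension and that the four cubic minors generate its homogeneous ideal --- so that $\pi$ is genuinely $\mathrm{Bl}_C\P^3$ rather than the blowup of a non-reduced thickening of $C$ --- which is where Bertini-type arguments for determinantal loci together with the Hilbert--Burch structure theorem come in.
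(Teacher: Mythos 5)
Your argument is correct, and in fact it supplies a complete proof where the paper states the proposition with no proof at all; the identification of the general $(1,1)^{\times 3}$ complete intersection in $\P^3\times\P^3$ with No.~12 of Table~2 in \cite{MM2} via the first projection is the standard route. All the numerical steps check out: $-K_X=(H_1+H_2)|_X$ by adjunction, $\rho=2$ by iterated Lefschetz, $\deg C=c_1^2-c_2=6h^2$ by Thom--Porteous, and $(-K_X)^3=64-2+2K_{\P^3}\cdot C+2g=14+2g=20$ forcing $g=3$ and $\chi=0$. The one point you flag as an obstacle is genuinely the only place requiring care, and it is standard: for a general $3\times4$ matrix of linear forms the rank~$\le 1$ locus is empty for dimension reasons, so by Eagon--Northcott the ideal of maximal minors has the expected codimension~$2$, is Cohen--Macaulay and (being generically reduced along the smooth determinantal curve) reduced and saturated; hence the sheaf of ideals generated by the four cubics equals $\I_C$ and the graph closure of the associated cubic map is literally $\mathrm{Bl}_C\P^3$. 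One could also run the argument in the opposite direction, starting from $C\subset\P^3$ of degree $6$ and genus $3$ cut out by cubics, noting $h^0(\I_C(3))=4$ and embedding the blowup into $\P^3\times\P^3$ as the graph; since both constructions produce irreducible families and quantum periods only depend on the deformation class, either direction suffices for the use the paper makes of this proposition.
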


\begin{corollary}
$I$-series $I_{10} = \L_2 G_{Y_{20}}$ is given by the pulback of hypergeometric series from two-dimensional torus
\begin{equation}
I_{20} = \sum_{a,b \geq 0} \frac{(a+b)!^4}{a!^4 b!^4} t^{a+b} = 
1 + 2 t + 18 t^2 + 164 t^3 + 1810 t^4 + 21252 t^5 + 263844 t^6 + 3395016 t^7  + \dots
\end{equation}
\end{corollary}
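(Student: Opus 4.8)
The plan is to run the same two-step toric computation used in the earlier corollaries, now in the ambient space $\P^3 \times \P^3$. By the preceding Proposition, $Y_{20}$ is a codimension $3$ linear section of the Segre embedding $\P^3 \times \P^3 \subset \P^{15}$; equivalently, it is the complete intersection of three divisors, each in $|\O(1,1)|$, i.e. of class $H_1 + H_2$, where $H_1, H_2$ are the hyperplane classes of the two factors. First I would write down Givental's $I$-function of the toric variety $\P^3 \times \P^3$ (\cite{Gi97}), then apply the quantum Lefschetz principle (\cite{CG}) for the three nef divisors of class $H_1 + H_2$, and finally extract the anticanonical-degree scalar component and regularize.

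For each factor $\P^3$ Givental's small $I$-function is $\sum_{d \ge 0} q^d / \prod_{k=1}^d (H+k)^4$, so for the product
\[
I_{\P^3 \times \P^3} = \sum_{a,b \ge 0} \frac{q_1^a\, q_2^b}{\prod_{k=1}^a (H_1+k)^4 \;\prod_{\ell=1}^b (H_2+\ell)^4}.
\]
Since $-K_{\P^3 \times \P^3} = 4H_1 + 4H_2$ and the three cut divisors sum to $3H_1 + 3H_2$, the class $\O(1,1)$ is nef and $-K_{Y_{20}} = (H_1+H_2)|_{Y_{20}}$ is ample, so $Y_{20}$ is Fano of index $1$ (and one checks the anticanonical degree equals $20$). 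Quantum Lefschetz for a nef complete intersection inserts one factor $\prod_{m=1}^{(H_1+H_2)\cdot\beta}(H_1+H_2+m)$ per cut divisor; with $\beta = (a,b)$ one has $(H_1+H_2)\cdot\beta = a+b$, giving
\[
I_{Y_{20}} = \sum_{a,b \ge 0} q_1^a\, q_2^b\; \frac{\prod_{m=1}^{a+b}(H_1+H_2+m)^3}{\prod_{k=1}^a (H_1+k)^4 \;\prod_{\ell=1}^b (H_2+\ell)^4}.
\]

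Next I would pass to the scalar series of the threefold. The $G$-series is graded only by anticanonical degree $\deg_\beta = (H_1+H_2)\cdot\beta = a+b$, so I specialize $q_1 = q_2 = t$ and project onto the fundamental class by setting $H_1 = H_2 = 0$: the numerator contributes $\prod_{m=1}^{a+b} m^3 = ((a+b)!)^3$ and the denominator $(a!)^4(b!)^4$, yielding the convergent torus series $\sum_{a,b} \frac{((a+b)!)^3}{(a!)^4(b!)^4}\, t^{a+b}$, which I claim equals $e^{2t} G_{Y_{20}}$. Applying the regularization $\L$, which multiplies the coefficient of $t^n$ by $n! = (a+b)!$, turns $((a+b)!)^3$ into $((a+b)!)^4$ and gives exactly
\[
\L\Big(\textstyle\sum_{a,b}\tfrac{((a+b)!)^3}{(a!)^4(b!)^4} t^{a+b}\Big) = \sum_{a,b \ge 0} \frac{((a+b)!)^4}{(a!)^4 (b!)^4}\, t^{a+b} = I_{10}.
\]

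The delicate point, which I expect to be the main obstacle, is the passage from the cohomology-valued $I_{Y_{20}}$ to the normalized scalar series of the Fano threefold: for index $1$, quantum Lefschetz recovers Givental's $J$-function (whose point-class component is $G_{Y_{20}}$) only up to the mirror-map correction, which here should be the pure shift $e^{st}$ with no reparametrization of $t$. I would resolve it by verifying that the torus series above is indeed $e^{2t} G_{Y_{20}}$: its $t^1$-coefficient is $2$, so multiplying by $e^{-2t}$ cancels the linear term, matching $G^{(1)}=0$ from the String equation; this simultaneously fixes $s = 2$ and confirms $I_{10} = \L_2 G_{Y_{20}}$ in the sense of Definition \ref{def-iseries}. (The same bookkeeping reproduces $I_{12}=\L_4 G_{Y_{24}}$ and $I_{15}=\L_3 G_{Y_{30}}$, a useful consistency check.) A secondary point is that the convexity of $\O(1,1)$ is what licenses the nef quantum Lefschetz principle in the first place.
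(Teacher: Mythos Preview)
Your proposal is correct and follows exactly the approach the paper intends: as in the first corollary of Section~\ref{geometric}, one combines Givental's $I$-function for the ambient toric variety (here $\P^3\times\P^3$, \cite{Gi97}) with the quantum Lefschetz principle for the three nef $(1,1)$ divisors (\cite{CG}), then reads off the shift $s=2$ from the linear term. You have simply spelled out in detail what the paper leaves as a one-line citation.
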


\begin{definition}
Fano threefold $Y^{(2)}_{12}$ can be described either as a section of Segre fourfold $\P^2 \times \P^2 \subset \P^8$ by quadric
or as double cover of $W$ with branch locus in anticanonical divisor.
This deformation class of varieties has number $6$ in table $2$ of \cite{MM2},
it has degree $12$, $\chi=-12$ and $\rho=2$.
\end{definition}

\begin{corollary}
$I$-series $I_{6,2} = \L_4 G_{Y^{(2)}_{12}}$ is given by the pullback of hypergeometric series from two-dimensional torus
\begin{equation}
I_{6,2} = \sum_{a,b \geq 0} \frac{(a+b)! (2a+2b)!}{a!^3b!^3} t^{a+b} 
= 1 + 4 t + 60 t^2 + 1120 t^3 + 24220 t^4 + 567504 t^5 + \dots
\end{equation}
\end{corollary}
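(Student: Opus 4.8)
The plan is to establish this exactly as the companion corollaries above, by combining Givental's mirror theorem for a toric variety with the quantum Lefschetz principle; here the toric variety is $\P^2\times\P^2$ and the twisting bundle is $\O(2,2)$. First I would record the geometry. By the definition above, $Y^{(2)}_{12}$ is a smooth member of $|\O_{\P^2\times\P^2}(2,2)|$ --- the Segre fourfold $\P^2\times\P^2\subset\P^8$ cut by a quadric --- so adjunction gives $-K_{Y^{(2)}_{12}}=\O(1,1)|_{Y^{(2)}_{12}}$, whence $Y^{(2)}_{12}$ has Fano index $1$ and
\[
(-K_{Y^{(2)}_{12}})^3=\int_{\P^2\times\P^2}(H_1+H_2)^3(2H_1+2H_2)=12,
\]
consistent with the stated invariants. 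As $\O(2,2)$ is nef, hence convex, the Coates--Givental quantum Lefschetz hyperplane theorem applies to $Y^{(2)}_{12}\subset\P^2\times\P^2$.

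Next I would write down Givental's $I$-function of the toric variety $\P^2\times\P^2$ (\cite{Gi97}), twist it by $\O(2,2)$, and restrict the two K\"ahler parameters to the diagonal $q_1=q_2=t$ --- this restriction is precisely the ``pullback from the two-dimensional torus''. In the scalar sector this yields the hypergeometric series
\[
\Phi(t):=\sum_{a,b\ge 0}\frac{(2a+2b)!}{a!^3\,b!^3}\,t^{a+b},
\]
where the six toric boundary divisors (three of class $H_1$, three of class $H_2$) produce the denominator $a!^3b!^3$, the bundle $\O(2,2)$ produces the numerator $(2a+2b)!$, and the exponent of $t$ equals $\langle -K_{Y^{(2)}_{12}},(a,b)\rangle=a+b$. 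Since $Y^{(2)}_{12}$ is Fano of index $1$, the quantum Lefschetz comparison involves only a pure exponential shift and no reparametrization of $t$, so $\Phi(t)=e^{st}\,G_{Y^{(2)}_{12}}(t)$ for a single constant $s$; comparing linear terms and using that $G^{(1)}=0$ by the String equation (cf.\ \ref{def-gseries}) forces $s=4$. Hence
\[
I_{6,2}=\L_4 G_{Y^{(2)}_{12}}=\L\bigl(e^{4t}G_{Y^{(2)}_{12}}\bigr)=\L \Phi=\sum_{a,b\ge 0}\frac{(a+b)!\,(2a+2b)!}{a!^3\,b!^3}\,t^{a+b},
\]
which is the asserted identity; the printed coefficients $1+4t+60t^2+\cdots$ then follow by direct expansion. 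As a cross-check, the right-hand side equals $I_{6,2;2}(\sqrt t)$, reflecting the alternative description of $Y^{(2)}_{12}$ as a double cover of $W=Y^{(2)}_{48}=Fl(1,2,3)$ branched over an anticanonical divisor, which one expects to halve the degree and substitute $t\mapsto t^2$.

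The step I expect to be the main obstacle is the justification that, for this index-one complete intersection, the twisted hypergeometric $I$-function is genuinely $e^{st}$ times the quantum period with a trivial mirror map --- equivalently, that the Coates--Givental comparison needs no Birkhoff-factorization correction and no change of the variable $t$. This is exactly the input supplied by \cite{Gi97} together with \cite{CG} for the sibling corollaries above, and it is carried out in detail in \cite{CCGK}; granting it, the remainder is the factorial bookkeeping displayed above together with the one-line determination $s=4$.
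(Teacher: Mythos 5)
Your proposal is correct and follows exactly the route the paper intends: the paper proves the analogous corollary for $Fl(1,2,3)$ by ``combining the computation of the $I$-series of the toric variety $\P^2\times\P^2$ in \cite{Gi97} with the quantum Lefschetz principle of \cite{CG}'', defers the details to \cite{CCGK}, and leaves this particular corollary without further comment. Your twist by $\O(2,2)$, the determination $s=4$ from the linear term, and the consistency check against $I_{6,2;2}(t)=I_{6,2}(t^2)$ all check out.
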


\begin{remark}
Series $I_{6,2}$ and $I_{6,2;2}$ are related by change of coordinates $I_{6,2;2} (t) = I_{6,2} (t^2)$.
\end{remark}

\begin{definition}
Fano threefold $Y^{(3)}_{12}$ is a double cover of $\P^1 \times \P^1 \times \P^1$ with branch locus in anticanonical divisor.
This deformation class of varieties has number $1$ in table $3$ of \cite{MM2},
it has degree $12$, $\chi=-8$ and $\rho=3$.
\end{definition}

\begin{corollary}
$I$-series $I_{6,3} = \L_6 G_{Y^{(3)}_{12}}$ is given by the pullback of hypergeometric series from two-dimensional torus
\begin{equation}
I_{6,3} = \sum_{a,b,c \geq 0} \frac{(2a+2b+2c)!}{a!^2 b!^2 c!^2} t^{a+b+c} 
= 1 + 6 t + 90 t^2 + 1860 t^3 + 44730 t^4 + 1172556 t^5 + \dots
\end{equation}
\end{corollary}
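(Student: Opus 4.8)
The plan is to recover $\widehat{G}_{Y^{(3)}_{12}}$, equivalently $I_{6,3}=\L_6 G_{Y^{(3)}_{12}}$, from the two ingredients used for the earlier corollaries and carried out in \cite{CCGK}: Givental's toric mirror theorem \cite{Gi97} for the ambient $\P^1\times\P^1\times\P^1$, and the quantum Lefschetz principle \cite{CG} adapted to the double cover. Since the branch divisor lies in $|{-K_{\P^1\times\P^1\times\P^1}}|=|\O(2,2,2)|=|2M|$ with $M=\O(1,1,1)$, the ramification formula gives $-K_{Y^{(3)}_{12}}=\pi^*M=\pi^*\O(1,1,1)$, so $\deg Y^{(3)}_{12}=2\cdot\O(1,1,1)^3=12$, and a genus-zero stable map to $Y^{(3)}_{12}$ of anticanonical degree $n$ pushes forward to a stable map to $\P^1\times\P^1\times\P^1$ of $\O(1,1,1)$-degree $n$, i.e.\ of $-K$-degree $2n$; this factor $2$ is the source of the eventual change of variables $t\mapsto t^2$ in the comparison with $I_{6,3;2}$.

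The toric input is that the quantum period of $\P^1\times\P^1\times\P^1$ is the product of three copies of that of $\P^1$, producing the denominators $a!^2b!^2c!^2$, with the monomial $t^{a+b+c}$ recording the $\O(1,1,1)$-degree. To apply quantum Lefschetz I would present $Y^{(3)}_{12}$ as an honest hypersurface in the smooth projective toric fourfold $P=\P_{\P^1\times\P^1\times\P^1}(\O\oplus\O(1,1,1))$: in the Cox coordinates $u,v$ attached to the two sections of $p\colon P\to\P^1\times\P^1\times\P^1$, the fibrewise-quadratic equation $c\,v^2+C(x)\,vu+B(x)\,u^2=0$, $C\in H^0(\O(1,1,1))$, $B\in H^0(\O(2,2,2))$, defines, after completing the square in $v$, the generic double cover branched in $\{B=0\}$; one checks $[Y^{(3)}_{12}]=2\xi$ for $\xi$ the relative hyperplane class, $-K_P-2\xi=p^*\O(1,1,1)$ nef, and $\xi|_{Y^{(3)}_{12}}=-K_{Y^{(3)}_{12}}$. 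Feeding $(P,2\xi)$ into the toric quantum Lefschetz formula, the curve classes effective on $Y^{(3)}_{12}$ are those of fibre degree $a+b+c$, the hypersurface class $2\xi$ contributes the numerator $(2a+2b+2c)!$ against the base denominators $a!^2b!^2c!^2$, and the power of $t$ is $\langle -K_P-2\xi,\beta\rangle=a+b+c$.

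This produces $\sum_{a,b,c\ge 0}\frac{(2a+2b+2c)!}{a!^2b!^2c!^2}\,t^{a+b+c}$ up to a shift, which is pinned down by the String equation $G^{(1)}=0$: the quantum period $\widehat{G}_{Y^{(3)}_{12}}$ has no $t$-term, whereas this series has $t$-coefficient $3\cdot 2=6$, so it equals $I_{Y^{(3)}_{12},6}=\L_6 G_{Y^{(3)}_{12}}=I_{6,3}$ and $\widehat{G}_{Y^{(3)}_{12}}=\N I_{6,3}$. As a consistency check, $t\mapsto t^2$ turns this series into $\sum\frac{(2a+2b+2c)!}{a!^2b!^2c!^2}t^{2(a+b+c)}=I_{6,3;2}$, the series of the preceding corollary, so the two statements are equivalent under $t\leftrightarrow t^2$ (the exact analogue of the remark $I_{6,2;2}(t)=I_{6,2}(t^2)$), and the leading terms $1,6,90,1860,44730,1172556,\dots$ agree with those of $I_{6,3;2}$ at even powers.

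The one point that is not mechanical is the legitimacy of quantum Lefschetz for a double cover. The line bundle $M=\O(1,1,1)=-\tfrac12 K_{\P^1\times\P^1\times\P^1}$ is nef but not ``convex'' in the sense required by the naive Givental--Lian--Liu--Yau argument, and in the model above $Y^{(3)}_{12}$ is disjoint from one of the two sections of $P$, whose class is a non-nef toric divisor restricting to $0$ on $Y^{(3)}_{12}$; one must verify that this collapses the ambient sum over $\Mori(P)$ onto the sum over $\Mori(Y^{(3)}_{12})$ with no surviving mirror-map correction (so that nef-ness of $-K_P-[Y^{(3)}_{12}]$ suffices), or else run the computation in the weighted-projective-bundle model used for the companion double cover $Y^{(2)}_{12}$ in \cite{CCGK}. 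Granted that, the remainder --- Givental's toric calculation and the factorial bookkeeping of $\L_s,\S_s,\N$ --- is routine.
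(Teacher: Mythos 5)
Your proposal follows essentially the route the paper intends: the paper gives no separate argument for this corollary, deferring to the pattern of the first one (``combine Givental's toric mirror theorem \cite{Gi97} with the quantum Lefschetz principle \cite{CG}'') and to the detailed computations in \cite{CCGK}, which indeed handle the double cover by realizing it as a hypersurface in a projective bundle over $\P^1\times\P^1\times\P^1$ exactly as you describe. Your write-up supplies more detail than the paper does --- in particular the convexity caveat and the normalization via the String equation --- and the numerics and the consistency with $I_{6,3;2}(t)=I_{6,3}(t^2)$ check out.
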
 

\begin{remark}
Series $I_{6,3}$ and $I_{6,3;2}$ are related by change of coordinates $I_{6,3;2} (t) = I_{6,3} (t^2)$.
\end{remark}

\section{Eta-products, Hauptmoduln and their M-series} \label{modular}


Let $\eta(q)$ be Dedekind's eta-function: $\eta(q) = q^{\frac{1}{24}} \prod_{n \geq 1} (1-q^n)$.

Consider $4$ eta-products and $1$ eta-quotient:
\begin{gather}
\eta_{6+} = \eta(q) \eta(q^2) \eta(q^3) \eta(q^6) \\
\eta_{10+} = \eta(q) \eta(q^2) \eta(q^5) \eta(q^{10}) \\
\eta_{12+} = \frac{\eta(q^2)^4 \eta(q^6)^4}{\eta(q) \eta(q^3) \eta(q^4) \eta(q^{12})} \\
\eta_{14+} = \eta(q) \eta(q^2) \eta(q^7) \eta(q^{14}) \\
\eta_{15+} = \eta(q) \eta(q^3) \eta(q^5) \eta(q^{15})
\end{gather}

Let $\sigma_1(n)$ be $-24$ times valuation of $\eta_{n+}$.

For a constant $c$ and a conjugacy class $g$ of Monster simple group 
denote by $T_{g,c}$ its McKay-Thompson series (see \cite{CN})
with constant term normalized to be $c$: $T_{g,c} = \frac{1}{q} + c + \sum_{n \geq 1} a_i(g) q^n$.

Take
$H_{6A,2} = T_{6A,10}$,
$H_{6A,3} = T_{6A,14}$,
$H_{10A} = T_{10A,4}$,
$H_{12A} = T_{12A,6}$,
$H_{14A,s} = T_{14A,s}$,
$H_{15A,s} = T_{15A,s}$:

\begin{gather}
T_{6A,0} = \frac{1}{q} + 79 q + 352 q^2 + 1431 q^3 + 4160 q^4 + 13015 q^5 + 31968 q^6 + \dots \\
H_{10A} = 8 + \frac{\eta^4(q)\eta^4(q^5)}{\eta^4(q^2)\eta^4(q^{10})} +
16 \frac{\eta^4(q^2)\eta^4(q^{10})}{\eta^4(q)\eta^4(q^5)} 
= \frac{1}{q} + 4 + 22 q + 56 q^2 + 177 q^3 + 352 q^4 + \dots \\
H_{12A} = \left(\frac{\eta(q^2)^2 \eta(q^6)^2}{\eta(q) \eta(q^3) \eta(q^4) \eta(q^{12})}\right)^6 =
 \frac{1}{q} + 6 + 15 q + 32 q^2 + 87 q^3 + 192 q^4 + \dots \\
 H_{14A} = 4 + \frac{\eta^3(q)\eta^3(q^7)}{\eta^3(q^2)\eta^3(q^{14})} +
8 \frac{\eta^3(q^2)\eta^3(q^{14})}{\eta^3(q)\eta^3(q^7)}
= \frac{1}{q} + 1 + 11 q + 20 q^2 + 57 q^3 + 92 q^4 + \dots \\
 H_{15A} = 3 + \frac{\eta^2(q)\eta^2(q^5)}{\eta^2(q^3)\eta^2(q^{15})} + 
9 \frac{\eta^2(q^3)\eta^2(q^{15})}{\eta^2(q)\eta^2(q^5)} 
= \frac{1}{q} + 1 + 8 q + 22 q^2 + 42 q^3 + 70 q^4 + \dots
\end{gather}

Define $M_n(t)$ as power-series satisfying the functional equation
$M_n(\frac{1}{H_n(q)}) = \eta_{n+} \cdot H_n^\frac{\sigma_1(n)}{24}$:

\begin{gather}
M_{6,2}(H_{6A,2}^{-1}(q)) = \eta_{6+} \cdot H_{6A,2}^\frac{1}{2} \\
M_{6,3}(H_{6A,3}^{-1}(q)) = \eta_{6+} \cdot H_{6A,3}^\frac{1}{2} \\
M_{10}(H_{10}^{-1}(q)) = \eta_{10+} \cdot H_{10A}^{\frac{3}{4}} \\
M_{12}(H_{12}^{-1}(q)) = \eta_{12+} \cdot H_{12A}^{\frac{1}{2}} \\
M_{14,s}(H_{14,s}^{-1}(q)) = \eta_{14+} \cdot H_{14A,s} \\
M_{15,s}(H_{15,s}^{-1}(q)) = \eta_{15+} \cdot H_{15A,s}
\end{gather}

\section{Differential equations and their solutions} \label{ode}

Let $t$ be a coordinate on $G_m = \Spec \C[t,t^{-1}]$ and $D = t\frac{d}{dt}$.

\begin{definition}[\cite{Go1}] \label{myd3}
\emph{Normalized operators of type $D3$} is the following $5$-dimensional family
of differential operators depending on parameters $b_1,b_2,b_3,b_4,b_5$
\footnote{Original definition has the other basis 
$a_{01},a_{02},a_{03},a_{11},a_{12}$
for parameter space $\A^5$.
Bases $a$ and $b$ are equivalent over $\Z$:
$b_1 = a_{11}$, 
$b_2 = a_{12} + 2 a_{01} - a_{11}^2$,
$b_3 = a_{01}$, 
$b_4 = a_{02} - a_{01} a_{11}$,
$b_5 = a_{03} - a_{01}^2$;
$a_{01} = b_3$,
$a_{02} = b_4 + b_1 b_3$,
$a_{03} = b_5 + b_3^2$,
$a_{11} = b_1$,
$a_{12} = b_2 - 2 b_3 + b_1^2$.
}:
\begin{multline*}
L(b_1,b_2,b_3,b_4,b_5) = D^3 -t \cdot b_1 D(D+1)(2D+1) -t^2 \cdot (D+1)(b_2 D(D+2)  + 4 b_3) - \\
-t^3 \cdot b_4 (D+1)(D+2)(2D+3) -t^4 \cdot b_5 (D+1)(D+2)(D+3)
\end{multline*}
\end{definition}

Define $L_{6,2}$, $L_{6,3}$, $L_{10}$, $L_{12}$, $L_{14}$, $L_{15}$ as follows:

\begin{gather} 
\label{ode62}   L_{6,2} = L(6, 368, 88, 1056, 3584) \\
\label{ode63}   L_{6,3} = L(8, 360, 108, 864, 2160) \\
\label{ode10}   L_{10}  = L(2, 112, 28, 184, 336) \\
\label{ode12}   L_{12}  = L(2, 80, 24, 96, 0) \\
\label{ode14}   L_{14}  = L(1, 59, 16, 68, 80) \\
\label{ode15}   L_{15}  = L(1, 43, 12, 78, 216)
\end{gather}


Locally equation $L_n$ has $1$-dimensional space of analytic solutions spanned by $F_n$:
\begin{gather}
\label{f6,2} F_{6,2}(t) = 1 + 44 t^2 + 528 t^3 + 11292 t^4 + 228000 t^5 + 4999040 t^6 + 112654080 t^7 + \dots \\
\label{f6,3} F_{6,3}(t) =  1 + 54 t^2 + 672 t^3 + 15642 t^4 + 336960 t^5 + 7919460 t^6 + 191177280 t^7 + \dots \\
\label{f10} F_{10}(t) = 1 + 14 t^2 + 72 t^3 + 882 t^4 + 8400 t^5 + 95180 t^6 + 1060080 t^7 +\dots  \\
\label{f12} F_{12}(t) = 1 + 12 t^2 + 48 t^3 + 540 t^4 + 4320 t^5 + 42240 t^6 + 403200 t^7 + \dots \\
\label{f14} F_{14}(t) = 1 + 8 t^2 + 24 t^3 + 240 t^4 + 1440 t^5 + 11960 t^6 + 89040 t^7 + \dots \\
\label{f15} F_{15}(t) = 1 + 6 t^2 + 24 t^3 + 162 t^4 + 1080 t^5 + 7620 t^6 + 55440 t^7 + \dots
\end{gather}

\section{Equivalence of realizations} \label{equivalence}

\begin{lemma} \label{lemmai}
$I$-series $I_n$ are solutions to differential equations $L_n$ listed in \ref{ode}, i.e.
$\N I_n = F_n$.
\end{lemma}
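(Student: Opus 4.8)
The plan is to verify, for each of the six indices $n \in \{(6,2),(6,3),10,12,14,15\}$, that the normalized $I$-series $\N I_n$ coincides with the analytic solution $F_n$ of the differential operator $L_n$ of type $D3$. Since each $L_n$ has a one-dimensional space of analytic solutions normalized so that the constant term is $1$, it suffices to check that $\N I_n$ is annihilated by $L_n$; the normalization $\N$ makes the leading coefficient $1$ automatically, and the coefficient of $t^1$ vanishes in $F_n$ (each $F_n$ starts $1 + O(t^2)$), which is exactly the defining property of a \emph{regular} $I$-series in Definition~\ref{def-iseries}.

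First I would reduce the problem to a purely formal identity on power series. The $I$-series of Section~\ref{geometric} are given explicitly either as hypergeometric sums over lattice points of a simplex (for the toric and quantum-Lefschetz cases $I_{6,2}$, $I_{6,3}$, $I_{12}$, $I_{20}$, $I_{15}$, $I_{6,2;2}$, $I_{6,3;2}$) or, in the case $I_{14} = \widehat{G}_{Y_{28}}$, as the quantum period computed in \cite{CCGK}. Applying the shift-regularization operators $\N = \S_{-i^{(1)}}$ and the inverse/forward Laplace transforms $\L$, $\L^{-1}$ from Section~\ref{sr} to these explicit series produces, in each case, a series whose coefficients are given by closed-form sums of products of factorials. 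For the hypergeometric families, the key structural fact is that a multivariate hypergeometric series $\sum_{\mathbf{a}} \frac{\prod_j (L_j(\mathbf{a}))!}{\prod_i a_i!^{k}} t^{|\mathbf{a}|}$ satisfies a GKZ-type system, and the one-variable operator annihilating the diagonal restriction is obtained by the standard creative-telescoping / Euler-operator manipulation: one writes $D = t\,d/dt$, observes $D t^{|\mathbf{a}|} = |\mathbf{a}| t^{|\mathbf{a}|}$, and uses the factorial recursions $(m+1)! = (m+1)\cdot m!$ to get a contiguity relation among neighboring coefficients. Carrying this out and matching against the explicit parameter vectors $(b_1,\dots,b_5)$ in \eqref{ode62}--\eqref{ode15} is a finite, mechanical computation for each $n$.

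The cleanest way to organize the verification is order-by-order: since $L_n$ is a third-order operator with polynomial coefficients of bounded degree in $t$ (degree $\le 4$), the statement ``$L_n (\N I_n) = 0$'' is equivalent to a recursion expressing the $m$-th coefficient of $\N I_n$ in terms of the previous four. One then checks that the hypergeometric (or quantum-period) coefficients satisfy this recursion. In practice this reduces to a single universal identity among factorials for each family, plus checking finitely many initial terms — the displayed expansions \eqref{f6,2}--\eqref{f15} already list enough coefficients to pin down the solution and serve as a consistency check against the $I$-series expansions displayed in Section~\ref{geometric} (after applying $\N$). For $I_{14}$, where no closed hypergeometric form is available, I would instead quote from \cite{CCGK} (or \cite{AESZ}) that the quantum period of $Y_{28}$ satisfies precisely the $D3$-operator $L_{14}$, or re-derive it from the quantum cohomology of the blow-up of a quadric along a twisted quartic; this is the case requiring genuine geometric input rather than formal manipulation.

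The main obstacle is the case $n = 14$: all other cases follow from a uniform hypergeometric-to-ODE argument (essentially the multidimensional analogue of how \eqref{kv} is proved), but $Y_{28}$ is a blow-up with no toric or complete-intersection description that makes its quantum period a manifest hypergeometric series, so one must either import the quantum-period computation from \cite{CCGK} or run a Gromov--Witten calculation on $Y_{28}$ directly and then verify annihilation by $L_{14}$. A secondary, bookkeeping obstacle is keeping the various shift constants $s_N$ straight: each $I_n$ is defined with its own shift ($\L_3$, $\L_4$, $\L_2$, $\L_6$, or $\L_0$), so one must confirm that applying $\N$ to the displayed series indeed lands on the $1 + O(t^2)$ normalization and that the resulting $t^1$-coefficient really is zero — equivalently, that $i^{(1)}$ for each raw $I$-series equals the chosen shift, which is a quick check against the first two terms of each expansion.
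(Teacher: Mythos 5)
Your strategy is sound, but it is genuinely different from the route the paper takes, and it is worth seeing what each buys. The paper's proof is a one-line appeal to \cite{Ga4}: every $G$-Fano threefold is \emph{quantum minimal}, so its quantum period $\widehat{G}_V$ is annihilated by a Fuchsian operator of order $3$ rather than the naively expected order $4$ or $5$; since such an operator lies in the five-parameter family of Definition~\ref{myd3}, it is pinned down by finitely many coefficients of the series from Section~\ref{geometric}, and the identification with the specific $L_n$ of \ref{ode62}--\ref{ode15} is then a finite check. This argument is uniform in $n$ and, in particular, handles $Y_{28}$ on exactly the same footing as the toric and complete-intersection cases — which is precisely the case you single out as the main obstacle. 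Your proposal replaces the a priori order-$3$ input by a direct verification that each explicit hypergeometric sum is annihilated by $L_n$, via creative telescoping on the lattice sums. That is legitimate and more self-contained (it does not lean on \cite{Ga4}), but be aware that the ``single universal identity among factorials'' you invoke is the entire content of the telescoping step and is not automatic; moreover, checking finitely many initial coefficients proves nothing by itself unless you already know the series satisfies \emph{some} order-$3$ equation, which is exactly the point where the paper's quantum-minimality input (or a completed telescoping certificate) must enter. In short: the paper trades your case-by-case computation for one structural theorem, at the price of leaving the finite matching implicit; your version makes the matching explicit but must either import \cite{CCGK} for $n=14$ or silently reprove quantum minimality there.
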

\begin{proof}
By \cite{Ga4} 
$G$-function $G_V$ of $G$-Fano threefold $V$ satisfy ODE of order $4$
and its Fourier--Laplace transform $\widehat{G}_V$
satisfy a Fuchsian ODE of order $3$.
\end{proof}

\begin{lemma} \label{lemmam}
$M$-series $M_n$ are solutions to differential equations $L_n$ listed in \ref{ode}, i.e. 
$\N M_n = F_n$.
\end{lemma}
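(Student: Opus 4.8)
The plan is to compare $M_n$ with $F_n$ at $t=0$, which for every operator $L_n$ of \ref{ode62}--\ref{ode15} is a point of maximal unipotent monodromy: there the indicial equation is $D^3=0$, so the solutions holomorphic at $t=0$ form a one-dimensional space spanned by $F_n$ with $F_n(0)=1$. First I would check that $M_n$ is a genuine power series with $M_n(0)=1$: after the substitution $t=H_n(q)^{-1}=q+O(q^2)$ the exponent $\sigma_1(n)/24$ is chosen exactly so that $\eta_{n+}\,H_n^{\sigma_1(n)/24}$ has $q$-order $0$ with leading coefficient $1$, hence $M_n(t)\in 1+t\,\C[[t]]$. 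Next, since $\N=\S_{-m^{(1)}}$ acts on power series by the Möbius substitution $\S_s A(t)=(1-st)^{-1}A\bigl(\tfrac{t}{1-st}\bigr)$ — which in the variable $H=1/t$ is just the shift $H\mapsto H-m^{(1)}$ of the constant term of the Hauptmodul — $\N M_n$ lies in $1+t^2\,\C[[t]]$ (so, in particular, the parameter $s$ in the cases $n=14,15$ drops out). It therefore suffices to show that $\N M_n$ is annihilated by $L_n$, for then uniqueness of the holomorphic solution at the maximally unipotent point gives $\N M_n=F_n$.

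The heart of the argument is the modular interpretation of the operators $L_n$. I would invoke that each $L_n$ is the symmetric square $\mathrm{Sym}^2\widetilde L_n$ of a second-order Fuchsian operator which, after $t=H_n(q)^{-1}$, is — up to a weight-one twist — the uniformizing differential equation of the genus-zero moonshine group $\Gamma_n$ whose Hauptmodul is $H_n$ (the groups of Monster classes $6A$ for $L_{6,2}$ and $L_{6,3}$, and $10A$, $12A$, $14A$, $15A$ for $L_{10}$, $L_{12}$, $L_{14}$, $L_{15}$); equivalently, the internal mirror map of $L_n$ at the maximally unipotent point is exactly $q\mapsto H_n(q)^{-1}$. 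This is the mirror moonshine for the $K3$ pencils mirror-dual to the $Y_n$ (for $n=14,15$ already recorded in \cite{LY2}, the remaining cases being of hypergeometric type where it reduces to classical ${}_2F_1$-identities as in \ref{kv}; see also \cite{Go1, Ga4}). Granting it, the solution space of $L_n$, pulled back along $q\mapsto t(q)$, is $u_0(\tau)^2\cdot\langle 1,\tau,\tau^2\rangle$, where $u_0$ is the holomorphic-at-the-cusp solution of $\widetilde L_n$ — a weight-one modular form on $\Gamma_n$ for the multiplier system of $\widetilde L_n$, normalized by $u_0=1+O(q)$; in particular $F_n(t(q))=u_0(\tau)^2$.

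It then remains to identify $\N M_n(t(q))$ with $u_0(\tau)^2$. By the first paragraph, passing from $M_n$ to $\N M_n$ merely replaces $H_n$ by the Hauptmodul whose constant term correctly normalizes the internal mirror map, so this reduces to the identity $\eta_{n+}\,H_n^{\sigma_1(n)/24}=u_0^2$ on $\Gamma_n$. Here the exponent $\sigma_1(n)/24$ is tailored exactly so that $\eta_{n+}^{1/2}\,H_n^{\sigma_1(n)/48}$ carries the same multiplier system as $u_0$ and equals $1+O(q)$ at $i\infty$; their ratio is then a modular function for $\Gamma_n$, regular off the cusps and elliptic points, hence pinned down by finitely many $q$-coefficients via the valence formula, and comparing them it is the constant $1$. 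Squaring yields $\N M_n(t(q))=u_0(\tau)^2=F_n(t(q))$, i.e. $\N M_n=F_n$; together with Lemma~\ref{lemmai} this is the cuspform refinement of the miraculous eta-product formula \ref{mepf}.

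The step I expect to be the real obstacle is the modular interpretation in the second paragraph: verifying that the explicit $D3$ operator $L_n$ splits as a symmetric square whose internal mirror map is $H_n^{-1}$, and pinning down the multiplier system of $\widetilde L_n$. In practice this is a finite computation — invert the mirror map $t=H_n^{-1}(q)$, expand both $F_n(H_n^{-1}(q))$ and $\eta_{n+}H_n^{\sigma_1(n)/24}$ as $q$-series, and match to a bounded order — the bound being finite because a $D3$ operator is rigid (determined by its five parameters, hence by finitely many Taylor coefficients of its holomorphic solution) and the spaces of modular forms of weight $\le 2$ on $\Gamma_n$ that are holomorphic away from the cusps are finite-dimensional. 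Everything else — the $\eta$- and Hauptmodul-transformation laws, the Möbius description of $\N$, and the MUM-point uniqueness — is routine.
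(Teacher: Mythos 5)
Your argument is correct in substance and lands on the same two pillars as the paper, but it unpacks much more machinery than the paper actually deploys. The paper's entire proof is a one-sentence citation of Proposition 21 of \cite{Zag}: a modular form of weight $k$ (here the weight-$2$ form $\eta_{n+}H_n^{\sigma_1(n)/24}$), expressed as a power series in a modular \emph{function} (here $t=H_n^{-1}$), automatically satisfies a linear ODE of order $k+1=3$ with algebraic --- here rational, by genus zero --- coefficients; the identification of that third-order operator with the specific $L_n$, and hence of $\N M_n$ with $F_n$, is left as the implicit finite coefficient check. Your route through the symmetric-square decomposition $L_n=\mathrm{Sym}^2\widetilde L_n$, the weight-one solution $u_0$, and the valence-formula comparison of $\eta_{n+}^{1/2}H_n^{\sigma_1(n)/48}$ with $u_0$ is essentially the standard \emph{proof} of Zagier's proposition specialized to $k=2$, so it is the same mechanism viewed from the solution side rather than the operator side. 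One caution: as written, your second paragraph ``invokes'' mirror moonshine for these particular $L_n$ (that their internal mirror maps are exactly $H_n^{-1}$), which is dangerously close to assuming the statement being proved; the paper's ordering avoids this by first establishing that the modularly-defined $M_n$ satisfies \emph{some} $D3$-type equation and only then matching parameters. You do rescue this in your final paragraph by reducing everything to a bounded $q$-expansion comparison (legitimate, since a normalized $D3$ operator is determined by five parameters and the relevant spaces of forms are finite-dimensional), which is exactly the verification the paper also leaves tacit --- so the two proofs are the same up to packaging, with yours being more self-contained and the paper's being logically leaner.
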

\begin{proof}
By Proposition 21 of \cite{Zag} function $M_n$ satisfy some differential equation
of order $3$ in $D = t \frac{d}{dt}$.
\end{proof}

We have an immediate
\begin{corollary}
For every $n$ series $F_n$, $\N I_n$ and $\N M_n$ coincide.
\end{corollary}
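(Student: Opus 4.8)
\emph{Proof plan.} The plan is to deduce the Corollary in one line from the two preceding lemmas and then to make explicit the uniqueness statement that legitimises this. Lemma \ref{lemmai} gives $\N I_n = F_n$ and Lemma \ref{lemmam} gives $\N M_n = F_n$ for each $n \in \{(6,2),(6,3),10,12,14,15\}$; chaining the two equalities yields $\N I_n = F_n = \N M_n$, which is exactly the assertion. So the genuine content sits in the lemmas, and what remains here is to record why each of them picks out a single series $F_n$ rather than merely ``some solution of $L_n$''.

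First I would unpack the phrase ``$1$-dimensional space of analytic solutions'' from Section \ref{ode}. Every operator $L_n = L(b_1,\dots,b_5)$ of Definition \ref{myd3} has the shape $D^3 - t(\cdots) - t^2(\cdots) - t^3(\cdots) - t^4(\cdots)$, so its indicial polynomial at $t=0$ is $D^3$. The Frobenius recursion applied to a power series $A = \sum_{n \ge 0} a_n t^n$ with $a_0 = 1$ then forces $a_1 = 0$ (the coefficient of $t^1$ in $L_n A$ is just $a_1$) and afterwards determines $a_2, a_3, \dots$ uniquely --- for instance $a_2 = b_3/2$ --- while the other two solutions of the local system carry logarithms. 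Hence any power series $A$ with $A = 1 + O(t^2)$ and $L_n A = 0$ must coincide with $F_n$ as written in \ref{f6,2}--\ref{f15}.

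Next I would check that the normalization $\N$ always outputs such a series. For $A = 1 + a_1 t + O(t^2)$ we have $\N A = \S_{-a_1} A = \L_{-a_1}\L^{-1} A$; since $\L^{-1}$ and $\L$ preserve the constant term $1$, and multiplying by $e^{-a_1 t}$ kills the linear coefficient of $\L^{-1} A$, the series $\N A$ has constant term $1$ and no linear term. Feeding this into Lemma \ref{lemmai} --- which, via the quantum-minimality theorem of \cite{Ga4}, says $\N I_n$ is annihilated by the order-$3$ operator $L_n$ --- and into Lemma \ref{lemmam} --- which, via Proposition 21 of \cite{Zag}, says the same for $\N M_n$ --- the uniqueness of the previous paragraph forces $\N I_n = F_n$ and $\N M_n = F_n$ simultaneously, hence $F_n = \N I_n = \N M_n$.

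The hard part is not the Corollary but the two lemmas it invokes. On the geometric side one must know both that $\widehat{G}_{Y_n}$ solves an order-$3$ Fuchsian equation and that this equation is \emph{exactly} $L_n$, i.e. that the coefficients $b_1,\dots,b_5$ extracted from the quantum-period computations of \cite{CCGK} and \cite{Ga4} match those in \ref{ode62}--\ref{ode15}; on the modular side one must check that the third-order ODE that Zagier's Proposition 21 attaches to the eta-quotient presentation of $M_n$ carries those same coefficients. Once the order and the finitely many leading Taylor coefficients are matched, the one-dimensionality of the holomorphic solution space closes the gap and the three series $F_n$, $\N I_n$, $\N M_n$ collapse to one.
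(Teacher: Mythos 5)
Your proposal is correct and follows exactly the paper's route: the paper derives this corollary as an immediate consequence of Lemma \ref{lemmai} ($\N I_n = F_n$) and Lemma \ref{lemmam} ($\N M_n = F_n$) by transitivity, with no further argument. Your additional paragraphs --- the Frobenius recursion showing the indicial polynomial $D^3$ forces a unique analytic solution with $a_0=1$, $a_1=0$, $a_2=b_3/2$, and the check that $\N$ always produces a series of that shape --- are a correct and welcome elaboration of what the paper leaves implicit in the lemmas themselves, and you rightly flag that identifying the order-$3$ operators on both sides with the specific $L_n$ of \ref{ode62}--\ref{ode15} is where the real work lies.
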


and it implies the main
\begin{theorem} \label{i=m}
For every $n$ there are constants $s_n$ and $c_n$ such that
$I$-series of $G$-Fano threefolds satisfy generalized miraculous eta-product formula \ref{mepf}:
$I_{n,s}(H_{n,c}^{-1}) = \eta_{n+} \cdot H_{n,c}^\frac{\sigma_1(n)}{24}$.
\end{theorem}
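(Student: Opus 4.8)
The plan is to read off the formula from the coincidences already established. Recall that for each $n$ the series $M_n$ is \emph{defined} by the functional equation $M_n(H_n^{-1}(q))=\eta_{n+}\cdot H_n^{\sigma_1(n)/24}$, where $H_n=H_{nA,c_n}$ is the Hauptmodul fixed in Section~\ref{modular} (for $n=14,15$ its constant term is a free parameter, for which I fix any value); since $H_n^{-1}=q+O(q^2)$ is an honest local coordinate at $q=0$, this determines $M_n$ uniquely as a power series in $t$. So it suffices to exhibit a constant $s_n$ with $I_{n,s_n}=M_n$ as power series, for then substituting $H_n^{-1}(q)$ into both sides gives \ref{mepf} with $c_n$ the constant term of $H_n$.

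To produce $s_n$ I would use the one-parameter group of shift operators from Section~\ref{sr}. First I would record the elementary facts that $\L^{-1}(\L_sA)=e^{st}A$ on formal power series, whence $\S_s\circ\S_t=\S_{s+t}$ and $\S_0=\mathrm{id}$, and that $\S_s$ preserves the constant term; consequently $\N A=\S_{-a_1}A$ is the unique element of the $\S$-orbit of $A=1+a_1t+\cdots$ whose coefficient of $t$ vanishes. All the relevant series start with $1$ (the $I$-series by construction, $F_n$ by the lists of Section~\ref{ode}, and $M_n$ because $\N M_n=F_n$ by Lemma~\ref{lemmam} and $\N$ preserves the constant term). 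Now the corollary preceding the theorem gives $\N I_n=F_n=\N M_n$, i.e.\ $\S_{-a_1(I_n)}I_n=\S_{-a_1(M_n)}M_n$, so $I_n$ and $M_n$ lie in one $\S$-orbit. On the other hand each $I_n$ was introduced in Section~\ref{geometric} as $I_{Y_n,\sigma_n}=\L_{\sigma_n}G_{Y_n}$ for a definite shift $\sigma_n$ ($\sigma_{14}=0$, $\sigma_{15}=3$, $\sigma_{10}=2$, $\sigma_{12}=\sigma_{6,2}=4$, $\sigma_{6,3}=6$), and since $\L^{-1}I_{Y_n,s}=e^{st}G_{Y_n}$ the whole family $\{I_{n,s}\}_{s}$ is exactly the $\S$-orbit of $I_n$: $I_{n,s}=\S_{s-\sigma_n}I_n$. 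A one-line bookkeeping then pins down the value $s_n=\sigma_n+a_1(M_n)-a_1(I_n)$ for which $I_{n,s_n}=M_n$. The two index-two families are disposed of afterwards by the substitution $t\mapsto t^2$ recorded in Section~\ref{geometric}.

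Granting the two preceding lemmas, all of the above is purely formal, so the real difficulty lies upstream — in Lemma~\ref{lemmai} (that the geometric $I$-series of these $G$-Fano threefolds are annihilated by exactly the operators $L_n$ of Section~\ref{ode}, which rests on the quantum-minimality of $G$-Fano threefolds from \cite{Ga4}) and in Lemma~\ref{lemmam} (that $M_n$ satisfies an order-three equation in $D$ via Zagier's Proposition~21 of \cite{Zag}, refined by a coefficient comparison to the \emph{same} $L_n$). Within the formal part the only thing demanding care is bookkeeping across Sections~\ref{sr}, \ref{modular} and~\ref{ode}: one must take $F_n$ in its normalized form (no $t$-term), read the constants $c_n$ correctly from the McKay--Thompson normalizations $T_{nA,c}$, and use the $\sigma_n$ that genuinely define each $I_n$; a slip would only shift $s_n$ or $c_n$ by a constant, leaving the existence assertion of the theorem intact.
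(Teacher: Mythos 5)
Your proposal is correct and follows essentially the same route as the paper: the paper deduces the theorem directly from the corollary $\N I_n = F_n = \N M_n$ (itself resting on Lemmas \ref{lemmai} and \ref{lemmam}), and your argument merely makes explicit the formal bookkeeping with the shift operators $\S_s$ that the paper leaves implicit. No further comment is needed.
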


\section{Golyshev's modularity of minimal Fano threefolds, and beyond} \label{golmod}

Let $Y_N$ be a Fano threefold with $H^2(Y,\Z) = \Z K_Y$ and $(-K_Y)^3 = 2N$.
Let $G_Y = 1 + \sum_{n \geq 2} G^{(n)}(Y) t^n$ be its $G$-series.
Golyshev's modularity conjecture states that for every $N = 1,\dots,9, 11$ 
there exists such a constant $s_N$,
and a Monster conjugacy class $g_N$ 
($N+N$ in notations of \cite{CN}), 
and a constant $c_N$ such that
$$ \eta^2(q) \eta^2(q^N) \cdot T_{g_N,c_N}^{\frac{N+1}{12}}(q) = I_{Y_N,s_N}(\frac{1}{T_{g_N,c_N}(q)}) $$
where $T_{g_N,c_N} = \frac{1}{q} + c_N + O(q)$ is McKay-Thompson series for conjugacy class $g_N$
with constant term $c_N$.

For $N=6$ the conjugacy class $6+6$ is $6B$, for other values of $N$ it is $NA$.

In the table we specify values $N$, $g = g_N$, $c = c_N$ and $s = s_N$
for $16$ $G$-Fano threefolds of index $1$ --- $10$ Golyshev's cases and $6$ other
$G$-Fano threefolds with $N=6,6,10,12,14,15$ that are explained in this paper.

For integer $N$ we define numbers:

$\phi(N) = N \prod_{p | N} (1 - p^{-1})$,
$\psi(N) = N \prod_{p | N} (1 + p^{-1})$,
$\epsilon(N) = \frac{24}{\psi(N)}$,
$\iota(N) = \sum_{M|N} \phi(M) \epsilon(M)$.

\begin{tabular}{|l|c|c|c|c|c|c|c|c|c|c|c|c|c|c|c|c|}
\hline
N & 1  & 2  & 3  & 4  & 5  & 6  & 6  & 6  & 7  & 8  & 9  & 10  & 11  & 12  & 14  & 15    \\
\hline
$\epsilon(N)$ & $24$ & $8$ & $6$ & $4$ & $4$ & $2$ & $2$ & $2$ & $3$ & $2$ & $2$ & $4/3$ & $2$ & $1$ & $1$ & $1$ \\
\hline
$\iota(N)$    & $24$ &$16$ &$12$ &$10$ & $8$ & $8$ & $8$ & $8$ & $6$ & $6$ & $4$ & $8*$ & $4$ & $5*$ & $4$ & $4$  \\
\hline
s & 120& 24 & 12 &  8 &  6 &  6 &  5 &  4 &  4 &  4 &  3 &  2  &   * &  4  &   * &   *   \\
\hline
c & 744& 104& 42 & 24 & 16 & 14 & 12 & 10 &  9 &  8 &  6 &  4  & s+2 &  6  & s+1 & s+1   \\
\hline
g & 1A & 2A & 3A & 4A & 5A & 6A & 6B & 6A & 7A & 8A & 9A & 10A & 11A & 12A & 14A & 15A   \\
\hline
$\rho$&1& 1 &  1 &  1 &  1 &  3 &  1 &  2 &  1 &  1 &  1 &  2  &   1 &  4  &   2 &   3   \\
\hline
\end{tabular}

\begin{proposition}
Number $\psi(N)$ equals to index of $\Gamma_0(N)$ in $SL(2,\Z)$.
Number $\phi(N)$ equals to index of $\Gamma_1(N)$ in $\Gamma_0(N)$.
\end{proposition}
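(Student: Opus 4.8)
The plan is to reduce both equalities to finite counting through the reduction-mod-$N$ homomorphism $\pi_N\colon SL(2,\Z)\to SL(2,\Z/N\Z)$, whose surjectivity is the only genuine input. I would quote that surjectivity as classical: by the Chinese remainder theorem it suffices to treat $N=p^e$, and there any element of $SL(2,\Z/p^e\Z)$ lifts to $SL(2,\Z)$ by elementary row and column operations performed over $\Z$. Everything after that is orbit--stabilizer bookkeeping together with the standard multiplicative prime-power formulas for $\psi$ and $\phi$.

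For the first assertion I would let $SL(2,\Z)$ act on the projective line $\P^1(\Z/N\Z)$ (lines in $(\Z/N\Z)^2$, taken up to units) via $\pi_N$ followed by left matrix multiplication. The action is transitive: any unimodular column vector over $\Z/N\Z$ completes to an element of $SL(2,\Z/N\Z)$, so $SL(2,\Z/N\Z)$ acts transitively on $\P^1(\Z/N\Z)$, and $\pi_N$ is onto. The stabilizer of the standard line $\langle\left(\begin{smallmatrix}1\\0\end{smallmatrix}\right)\rangle$ is exactly $\{\left(\begin{smallmatrix}a&b\\c&d\end{smallmatrix}\right)\in SL(2,\Z): c\equiv 0\pmod N\}=\Gamma_0(N)$, hence $[SL(2,\Z):\Gamma_0(N)]=|\P^1(\Z/N\Z)|$. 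This cardinality is multiplicative in $N$ by the Chinese remainder theorem, and for a prime power one counts the $p^e$ lines $\langle\left(\begin{smallmatrix}a\\1\end{smallmatrix}\right)\rangle$ plus the $p^{e-1}$ lines $\langle\left(\begin{smallmatrix}1\\pb\end{smallmatrix}\right)\rangle$, giving $|\P^1(\Z/p^e\Z)|=p^e+p^{e-1}=p^e(1+p^{-1})$. Multiplying over the primes dividing $N$ yields $N\prod_{p\mid N}(1+p^{-1})=\psi(N)$.

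For the second assertion I would use the homomorphism $\chi\colon\Gamma_0(N)\to(\Z/N\Z)^\times$, $\left(\begin{smallmatrix}a&b\\c&d\end{smallmatrix}\right)\mapsto d\bmod N$; it lands in the unit group because $c\equiv 0$ forces $ad\equiv 1\pmod N$, and it is multiplicative for the same reason. Its kernel is the subgroup of $\Gamma_0(N)$ with $d\equiv 1$ (equivalently $a\equiv 1$), which is $\Gamma_1(N)$. For surjectivity, given an integer $d$ with $\gcd(d,N)=1$, choose $a$ with $ad\equiv 1\pmod N$ and set $b=(ad-1)/N\in\Z$; then $\left(\begin{smallmatrix}a&b\\N&d\end{smallmatrix}\right)\in\Gamma_0(N)$ has determinant $ad-bN=1$ and maps to $d$. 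Hence $[\Gamma_0(N):\Gamma_1(N)]=|(\Z/N\Z)^\times|$, which equals $\phi(N)=N\prod_{p\mid N}(1-p^{-1})$ by the usual count $|(\Z/p^e\Z)^\times|=p^e-p^{e-1}$.

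The main obstacle, modest as it is, is the surjectivity of $\pi_N$, which powers the transitivity step in the first part; with that in hand, both indices are pure counting. An alternative that avoids orbit--stabilizer language is to exhibit explicit right coset representatives of $\Gamma_0(N)$ in $SL(2,\Z)$ labelled by $\P^1(\Z/N\Z)$ and of $\Gamma_1(N)$ in $\Gamma_0(N)$ labelled by $(\Z/N\Z)^\times$, but the formulation above is cleaner and the arithmetic input is identical.
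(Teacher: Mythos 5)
Your proof is correct, and it is the standard argument; the paper itself states this proposition as a classical fact and offers no proof at all, so there is nothing to compare against. Both halves check out: the orbit--stabilizer computation on $\P^1(\Z/N\Z)$ (with surjectivity of reduction mod $N$ as the only real input) gives $[SL(2,\Z):\Gamma_0(N)]=N\prod_{p\mid N}(1+p^{-1})=\psi(N)$, and the determinant-of-the-diagonal homomorphism $\left(\begin{smallmatrix}a&b\\c&d\end{smallmatrix}\right)\mapsto d\bmod N$ onto $(\Z/N\Z)^\times$ with kernel $\Gamma_1(N)$ gives the second index. One point worth being explicit about, since it is a frequent source of off-by-a-factor-of-two errors: the indices here are taken inside $SL(2,\Z)$, not $PSL(2,\Z)$, which is why $[\Gamma_0(N):\Gamma_1(N)]$ comes out as the full Euler totient $\phi(N)$ rather than $\phi(N)/2$; your argument handles this correctly because $-I\in\Gamma_0(N)$ maps to $-1\not\equiv 1$ for $N>2$.
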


Number $\epsilon(N)$ is integer only for $15$ values of $N$.
\begin{proposition}
Let $N$ be one of $1,\dots,8,11,14,15,23$.
Denote by $M_{23}$ the Mathieu group and by $V$ its natural $24$-dimensional representation
induced from Mathieu group $M_{24}$ (which in turn induced from Conway group $Co_0$).
Let $g \in M_{23}$ be an element of Mathieu group $M_{23}$ of order $N$.
Then number $\epsilon(N)$ equals to the trace $Tr {g : V}$
and 
number $\iota(N)$ equals to the dimension of invariants $\dim V^g$.
\end{proposition}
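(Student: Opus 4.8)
The plan is to recognize $V$, restricted first to $M_{24}$ and then to $M_{23}$, as a permutation module, and thereby to convert both equalities into statements about cycle shapes. Recall that the embedding $M_{24}=\Aut(\mathcal{G})\hookrightarrow \mathrm{Co}_0=\Aut(\Lambda)$, coming from the construction of the Leech lattice $\Lambda$ out of the binary Golay code $\mathcal{G}\subset\F_2^{24}$, realizes $M_{24}$ as a group of permutations of the $24$ coordinate axes; hence $V|_{M_{24}}$ is the natural permutation representation $\C\Omega$ on the $24$-point set $\Omega=\{1,\dots,24\}$, and $V|_{M_{23}}$ is its restriction to $M_{23}=\mathrm{Stab}_{M_{24}}(\infty)$. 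So for every $h\in M_{23}$ one has $\Tr(h:V)=|\mathrm{Fix}_\Omega(h)|$ and $\dim V^{h}=\#\{\langle h\rangle\text{-orbits on }\Omega\}$; both are rational, hence unchanged by Galois conjugation, and since for each relevant order $M_{23}$ has either a single class or a single algebraically conjugate pair, both depend only on $N=\ord(h)$. (In particular the admissible list $N\in\{1,\dots,8,11,14,15,23\}$ is exactly the set of element orders of $M_{23}$, which explains its shape.)

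For the first equality I would write the cycle shape of $g$ of order $N$ on $\Omega$ as $\prod_{d\mid N}d^{a_d}$ with $\sum_{d\mid N}d\,a_d=24$. These shapes are classical; for $N=1,2,3,4,5,6,7,8,11,14,15,23$ they are
\begin{gather*}
1^{24},\ 1^{8}2^{8},\ 1^{6}3^{6},\ 1^{4}2^{2}4^{4},\ 1^{4}5^{4},\ 1^{2}2^{2}3^{2}6^{2},\\
1^{3}7^{3},\ 1^{2}2\cdot4\cdot8^{2},\ 1^{2}11^{2},\ 1\cdot2\cdot7\cdot14,\ 1\cdot3\cdot5\cdot15,\ 1\cdot23,
\end{gather*}
and in each case $a_1=24/\psi(N)=\epsilon(N)$, so $\Tr(g:V)=\epsilon(N)$. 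For squarefree $N$ this is transparent: there $\psi(N)=\sum_{d\mid N}d=\sigma_1(N)$ and the shape is $\bigl(\prod_{d\mid N}d\bigr)^{\,24/\sigma_1(N)}$, so $a_1=24/\sigma_1(N)$; the cases $N=4$ and $N=8$ are checked by hand.

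The second equality I would then deduce from the first by orbit counting, which also makes it conceptual rather than a coincidence. If $k\mid N$ then $g^{k}\in M_{23}$ has order $N/k$ (again on our list), and a point of $\Omega$ is $g^{k}$-fixed iff its $\langle g\rangle$-orbit has length dividing $k$; hence $\sum_{e\mid k}e\,a_e=|\mathrm{Fix}_\Omega(g^{k})|=\epsilon(N/k)$ by the first equality applied to $g^{k}$. Möbius inversion over the divisor lattice of $N$ gives $k\,a_k=\sum_{j\mid k}\mu(k/j)\,\epsilon(N/j)$; summing over $k\mid N$, exchanging summations, and using $\sum_{i\mid m}\mu(i)/i=\phi(m)/m$ yields
\[
\dim V^{g}=\sum_{k\mid N}a_k=\frac1N\sum_{M\mid N}\phi(M)\,\epsilon(M)=\iota(N)
\]
(with the normalization of $\iota$ as in the table of \S\ref{golmod}; equivalently $\dim V^{g}=\tfrac1N\sum_{h\in\langle g\rangle}\Tr(h:V)$, i.e.\ Burnside's lemma for $\langle g\rangle$ acting on $\Omega$).

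The main obstacle is the first equality, i.e.\ pinning down the fixed-point counts $|\mathrm{Fix}_\Omega(g)|=24/\psi(N)$: the efficient route is to quote the known conjugacy-class and cycle-shape data of $M_{24}$ and carry out the twelve-line check above. A self-contained alternative would show that for $N$ on our list the cycle shape of an order-$N$ element of $M_{24}$ is the unique \emph{balanced} Frame shape $\prod_{d\mid N}d^{a_d}$ — with $a_d=a_{N/d}$, $a_d\ge 0$, $\sum_{d\mid N}d\,a_d=24$ — compatible with the multiplicities forced by the Sylow subgroups of $M_{24}$ and with the $5$-transitivity of its action; but verifying that these constraints single out a unique shape for every such $N$ is the delicate point, so in practice I would cite the classification. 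As a closing remark linking back to \S\S\ref{modular}--\ref{equivalence}: the eta-product $\prod_{d\mid N}\eta(q^{d})^{a_d}$ attached to $g$ is a power of the form $\eta_{N+}$, of weight $\tfrac12\dim V^{g}=\tfrac12\iota(N)$ on the Fricke group $\Gamma_0(N){+}$, and the values $N=10,12$ — which occur in the $G$-Fano table but not here — are excluded precisely because there $\eta_{N+}$ fails to be a genuine eta-\emph{product} with nonnegative exponents, mirroring the absence of elements of those orders in $M_{23}$.
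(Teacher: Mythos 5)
Your proof is correct, and in substance it is the argument the paper leaves implicit: the proposition is stated here without proof, and the ingredients you use --- that $V$ restricted to $M_{24}\subset Co_0$ is the $24$-point permutation module, and that the cycle shape of an element of $M_{23}$ is determined by its order --- are exactly the content of the Frobenius--Mukai proposition and the Frame-shape table given later in the paper. What you add is the explicit twelve-case verification $a_1=24/\psi(N)=\epsilon(N)$ (with the clean remark that for squarefree $N$ the shape is $\bigl(\prod_{d\mid N}d\bigr)^{24/\sigma_1(N)}$) and, more usefully, the Burnside/M\"obius derivation of the invariant-dimension identity, which turns the second equality from a coincidence into a formal consequence of the first. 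In doing so you correctly caught that the paper's displayed definition $\iota(N)=\sum_{M\mid N}\phi(M)\epsilon(M)$ is off by a factor of $N$ from both the table in \S\ref{golmod} and the proposition itself (it would give $\iota(2)=32$ rather than $16$); the intended quantity is $\frac1N\sum_{M\mid N}\phi(M)\epsilon(M)$, i.e.\ the orbit count, and your argument proves the proposition with that normalization. The one ingredient you cite rather than prove --- the list of cycle shapes of $M_{23}$ and the fact that classes of equal order are power-equivalent --- is precisely what the paper also imports from the classical literature, so there is no gap.
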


\begin{remark}
Note that for $N = 11, 14, 15$ the respective space of modular forms is $2$-dimensional
and any choice of $s$ produces a modular relation.
There is a particular choice of $c$ depending on $s$: the difference $(c-s)$ is an invariant of
Fano threefold.
For $N \neq 11, 14, 15$ the choice of $s$ is unique: $s$ is a natural number such that
$I$-function has singularity at $0$\footnote{There is a single ambiguity in case $N=7$:
one have to choose $s=4$ but not $s=5$.}.
\end{remark}


\section{Mathieu groups.}
\begin{definition}
Let $S$ be the set of $24$ points and $S_{24} = Aut(S)$ is its group of automorphisms.
Let $M = S^\Q$ be a vector space of the tautological $24$-dimensional representation of $S_{24}$.
Mathieu group $M_{24}$ is a particular simple subgroup of $S_{24}$
of order $244823040 = 23 \cdot 11 \cdot 7 \cdot 5 \cdot 3^3 \cdot 2^{10} = 24 \cdot 23 \cdot 22 \cdot 21 \cdot 20 \cdot (16 \cdot 3)$.
\end{definition}
Natural action $M_{24} : S$ is $5$-transitive.

\begin{definition}
Stabilizer of one point in this action is simple Mathieu group $M_{23}$
of order $23 \cdot 11 \cdot 7 \cdot 5 \cdot 3^2 \cdot 2^7$.
\end{definition}

\begin{definition}
Any transposition $h \in S_{24}$ of symmetric group decomposes into the product of cycles,
so we will say \emph{Frame shape} of $h$ is $\prod_{n\geq 1} {\mathbf i}^{a_i}$
where $a_i(h)$ is number of cycles of length $n$. 
Frame shape is a complete invariant for
conjugacy classes of $S_{24}$ (and complete up to power-equivalence for $M_{24}$ and $M_{23}$).
\end{definition}
\begin{example}
\begin{enumerate}
\item
$S_{24}$ has $1575$ Frame shapes and conjugacy classes (equal to number of partitions of $24$),
\item
$M_{24}$ has $21$ Frame shapes and $26$ conjugacy classes,
\item
$M_{23}$ has $12$ Frame shapes and $17$ conjugacy classes.
\end{enumerate}
\end{example}

\begin{definition}
Order of element (conjugacy class, Frame shape) $h$
 equals to the least common multiple of cycle lengths in its cycle decomposition: $n(h) = lcm(a_1,\dots,a_{24})$.
Denote 
by $G(h) = Tr {h : \Q^{24} = H^\bullet(24 points,\Q)}$
the number $a_1$ of cycles of length one 
(it can be computed via Lefschetz fixed point formula on finite set of $24$ elements).
\end{definition}

Obviously $G(h)$ is integer non-negative and if $h \in M_{23}$ then $G(h) \geq 1$.
\begin{proposition}[Frobenius, Mukai]
\begin{enumerate}
\item
$h \in M_{24}$ comes from $M_{23}$ $\iff$ $G(h) \geq 1$.
\item
For element $h \in M_{23}$ number $G(h)$ depends only on $n(h)$: $G(h) = \epsilon(n)$.
\item
There are $12$ orders of elements in $M_{23}$: from $1$ to $8$, $11$, $14$, $15$ and $23$.
Frame shapes of $M_{23}$ are determined by the orders.
\end{enumerate}
\end{proposition}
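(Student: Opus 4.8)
The plan is to establish the three assertions in the order (1), (3), (2), since the fixed-point count in (2) rests on the frame-shape determination in (3); throughout write $a_1(h)$ for the number of $1$-cycles of $h$, so that by definition $G(h)=\Tr(h:\Q^{24})=a_1(h)$. Assertion (1) uses only transitivity: by construction $M_{23}$ is the stabilizer in $M_{24}$ of a chosen point $p\in S$, and since the action $M_{24}:S$ is $5$-transitive, hence transitive, all $24$ point-stabilizers are conjugate in $M_{24}$. Thus $h$ is conjugate into $M_{23}$ (\emph{comes from $M_{23}$}) if and only if it lies in some point-stabilizer, i.e. fixes at least one point of $S$, i.e. $G(h)=a_1(h)\ge 1$. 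This proves (1) and simultaneously the displayed remark that $h\in M_{23}$ forces $G(h)\ge 1$.

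For (3) I would read off orders and cycle types from the Steiner system $S(5,8,24)$ preserved by $M_{24}$, using two engines, \emph{balance} and \emph{powering}. The structural input is that every $M_{24}$ frame shape $\prod_{d|N}d^{a_d}$ is balanced, $a_d=a_{N/d}$ for all $d|N$; this self-duality under the Fricke involution is the Mason/Conway--Norton property of the $M_{24}$ eta-products and reflects the unimodularity of the Leech lattice. The element orders of $M_{24}$ form the classical list $\{1,\dots,8,10,11,12,14,15,21,23\}$, and orders only involve primes dividing $|M_{23}|=2^7\cdot 3^2\cdot 5\cdot 7\cdot 11\cdot 23$. For a prime $p$ an element has only $1$- and $p$-cycles, so $24=a_1+p\,a_p$ and balance gives $a_1=a_p$, whence $a_1=24/(p+1)$; this is integral exactly when $(p+1)\mid 24$, i.e. for $p\in\{2,3,5,7,11,23\}$ — precisely the prime divisors of $|M_{23}|$ — with shapes $1^8 2^8$, $1^6 3^6$, $1^4 5^4$, $1^3 7^3$, $1^2 11^2$, $1^1 23^1$. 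For composite $N$ the shape is pinned by powering: the frame shape of $h^k$ is a combinatorial function of that of $h$, and equating $h^k$ for the prime-power exponents $k$ with the shapes just found, together with balance, has a unique solution. For squarefree $N$ the resulting equations are symmetric and force a homogeneous shape $\prod_{d|N}d^m$ (e.g. for $N=6$, imposing that $h^2$ have type $1^6 3^6$ and $h^3$ type $1^8 2^8$ yields $1^2 2^2 3^2 6^2$); for $N=4,8$ one gets $1^4 2^2 4^4$ and $1^2 2^1 4^1 8^2$. Discarding the orders $10,12,21$, whose elements are fixed-point-free (hence, by (1), not in $M_{23}$), leaves exactly the twelve orders $1,\dots,8,11,14,15,23$, each carrying a single frame shape — the asserted determination of frame shapes by order.

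Assertion (2) is then structurally immediate: the frame shape, and in particular $a_1$, depends only on $N=n(h)$, hence so does $G(h)=a_1(h)$. It remains to identify this value with $\epsilon(N)=24/\psi(N)$. For squarefree $N$ the homogeneous shape gives $24=m\sum_{d|N}d$, and since $\sum_{d|N}d=\prod_{p|N}(1+p)=\psi(N)$ for squarefree $N$, we obtain $a_1=m=24/\psi(N)=\epsilon(N)$; this covers $N=1,2,3,5,6,7,11,14,15,23$. For the two non-squarefree orders the shapes above give $a_1=4=24/\psi(4)=\epsilon(4)$ and $a_1=2=24/\psi(8)=\epsilon(8)$. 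In every case $G(h)=a_1=\epsilon(N)$, which proves (2).

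The main obstacle is the balance property invoked in (3): that every $M_{24}$ cycle shape is self-dual, and hence determined by its order. Balance is not a formal consequence of the degree relation $\sum_{d|N}d\,a_d=24$; it genuinely uses the geometry of the Golay code / Steiner system (equivalently Frobenius's explicit character table of $M_{24}$, together with the compatibility of cycle types under powering). The most self-contained route is to compute directly, for one generator of each order, its action on the $24$ points inside $S(5,8,24)$, and to use that the trace on the irreducible $23$-dimensional constituent $V_{23}=\Q^{24}\ominus\mathbf 1$ is an algebraic integer; the non-squarefree orders $4,8$ and the exclusion of $10,12,21$ from $M_{23}$ are exactly the places where this explicit input cannot be bypassed.
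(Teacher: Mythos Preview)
The paper does not prove this proposition; it is attributed to Frobenius and Mukai and stated as a known fact, with the next proposition simply tabulating the twelve $M_{23}$ frame shapes. Your write-up therefore supplies more than the paper does.

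Your argument is essentially correct. Part (1) is clean: transitivity of $M_{24}$ on $S$ makes all point-stabilizers conjugate, so ``comes from $M_{23}$'' is equivalent to $a_1(h)\ge 1$. Parts (3) and (2) are handled in the right order, and you honestly isolate the one substantive input, namely balance $a_d=a_{N/d}$ of $M_{24}$ cycle shapes. Note that for general $M_{24}$-classes the level of balance is $N(g)=\gcd\cdot\mathrm{lcm}$, not the order $n(h)$; your formulation is saved exactly because for $h\in M_{23}$ one has $a_1\ge 1$, hence $\gcd=1$ and level equals order. It would be worth saying this explicitly, since otherwise the reader may object with $2^{12}$ or $3^8$.

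Two small points. First, for squarefree $N$ balance alone only gives $a_d=a_{N/d}$, not that all $a_d$ coincide; homogeneity of the shape genuinely needs the extra constraint. You do invoke powering for $N=6$, and in fact for every squarefree $N$ on the list the system ``balance + degree $24$ + $a_1\ge 1$'' (or balance + powering) has a unique integral solution, which happens to be homogeneous; a one-line check of the Diophantine equation $\sum_{d\mid N,\,d\le\sqrt N}(d+N/d)\,a_d=24$ would make this airtight. Second, the exclusion of orders $10,12,21$ from $M_{23}$ does not need any outside input: with $a_1\ge 1$ (so level $=N$), balance plus $\sum d\,a_d=24$ has no nonnegative integer solution for $N\in\{10,12,21\}$, so these orders are ruled out purely combinatorially. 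With those tweaks your sketch is a complete proof, resting (as it must) on Frobenius's determination of the $M_{24}$ classes or, equivalently, on the balanced-shape property coming from the Golay/Leech construction.
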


\begin{proposition}
$M_{23}$ has the following $12$ Frame shapes:

\begin{tabular}{|l|c|c|c|c|c|c|c|c|c|c|c|c|}
\hline
$g$ & $1^{24}$ & $1^8 2^8$ & $1^6 3^6$ & $1^4 2^2 4^4$ & $1^4 5^4$ & $1^2 2^2 3^2 6^2$ & $1^3 7^3$
& $1^2 2^1 4^1 8^2$ & $1^2 11^2$ & $1^1 2^1 7^1 14^1$ & $1^1 3^1 5^1 15^1$ & $1^1 23^1$      \\
\hline
$N$ & $1$ & $2$ & $3$ & $4$ & $5$ & $6$ & $7$ & $8$ & $11$ & $14$ & $15$ & $23$ \\
\hline
$w$ & $12$ & $8$ & $6$ & $5$ & $4$ & $4$ & $3$ & $3$ & $2$ & $2$ & $2$ & $1$ \\
\hline
\end{tabular}

$M_{24}$ has the following $9$ extra Frame shapes:
\begin{tabular}{|l|c|c|c|c|c|c|c|c|c|}
\hline
$g$ & $2^{12}$ & $3^8$ & $2^4 4^4$ & $4^6$ & $6^4$ & $2^2 10^2$ & $2^1 4^1 6^1 12^1$ & $12^2$ & $3^1 21^1$ \\
\hline
$n$ & $2$ & $3$ & $4$ & $4$ & $6$ & $10$ & $12$ & $12$ & $21$ \\
\hline
$N$ & $4$ & $9$ & $8$ & $16$ & $36$ & $20$ & $24$ & $144$ & $63$ \\
\hline
$w$ & $6$ & $4$ & $4$ & $3$ & $2$ & $2$ & $2$ & $1$ & $1$ \\ 
\hline
\end{tabular}
\end{proposition}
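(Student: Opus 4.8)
The plan is to derive both tables from a single list — the cycle types (Frame shapes) of $M_{24}$ in its natural action on the $24$-element set $S$ — together with the description of $M_{23}$ as a one-point stabilizer. I would record the elementary reduction first. Since $M_{24}$ is transitive (indeed $5$-transitive) on $S$ and $M_{23} = (M_{24})_p$ is by definition the stabilizer of a point $p \in S$, every $h \in M_{23}$ fixes $p$, so $G(h) = a_1(h) \geq 1$. Conversely, if $g \in M_{24}$ has $a_1(g) \geq 1$ then $g$ fixes some $q \in S$, and choosing $\sigma \in M_{24}$ with $\sigma(q) = p$ (possible by transitivity) produces $\sigma g \sigma^{-1} \in M_{23}$ with the same Frame shape. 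Hence the Frame shapes realized in $M_{23}$ are exactly those of $M_{24}$ with $a_1 \geq 1$, while the ``extra'' shapes of the second table are precisely the fixed-point-free ones, $a_1 = 0$. This reduces the proposition to listing the Frame shapes of $M_{24}$ and sorting them by $G(h)$.

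To produce the cycle types of $M_{24}$ I would use that the Frame shape of a permutation $g$ is determined by the fixed-point counts of its powers: writing $a_d(g)$ for the number of $d$-cycles, one has $\sum_{e \mid d} e\, a_e(g) = |\mathrm{Fix}(g^d)|$ for every $d$, so $(a_d(g))$ is recovered from $\big(|\mathrm{Fix}(g^d)|\big)_{d \mid n(g)}$ by M\"obius inversion. Because the action is $2$-transitive, the permutation character decomposes as $1 + \chi_{23}$ with $\chi_{23}$ irreducible of degree $23$, whence $|\mathrm{Fix}(g^k)| = 1 + \chi_{23}(g^k)$. Reading the values of $\chi_{23}$ on each class and on its power classes from the character table of $M_{24}$ — classically computed by Frobenius and recorded in the ATLAS — I would compute $a_1 = G(h)$ and the full Frame shape of each of the $26$ conjugacy classes. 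The element orders that occur are $1,2,3,4,5,6,7,8,10,11,12,14,15,21,23$, and the $26$ classes collapse to $21$ distinct Frame shapes because the Galois-conjugate (``power-equivalent'') pairs $7A/B$, $14A/B$, $15A/B$, $21A/B$, $23A/B$ each share a cycle type — this is exactly the power-equivalence clause in the definition of Frame shape. A self-contained variant realizes $M_{24}$ as $\Aut$ of the binary Golay code (equivalently of the Steiner system $S(5,8,24)$) and enumerates cycle types by hand, but importing the character values is the efficient route.

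Sorting the $21$ shapes by $a_1$ completes the proof. The twelve with $a_1 \geq 1$ are those of the first table, and by the reduction above they are precisely the Frame shapes of $M_{23}$; one checks that the five Galois-conjugate pairs in $M_{23}$ (of orders $7,11,14,15,23$) collapse its $17$ classes to these $12$ shapes, and that here each occurring order $1,\dots,8,11,14,15,23$ singles out one shape. The remaining nine shapes are fixed-point-free and form the second table. The auxiliary columns then follow by inspection of each shape: the order $n(h)$ is the least common multiple of the cycle lengths, the weight $w = \tfrac{1}{2}\sum_n a_n$ is half the number of cycles, and the level $N$ is the product of the smallest and largest cycle length (it coincides with $n(h)$ for the first table, where the smallest cycle length is $1$). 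The main obstacle is concentrated in the middle step — establishing the conjugacy classes and the degree-$23$ character values of $M_{24}$ rigorously; once that input is in hand, the combinatorial reduction and the M\"obius inversion are entirely routine.
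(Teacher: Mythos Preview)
Your argument is correct. The paper states this proposition without proof, treating it as a standard tabulation from the literature on $M_{24}$ (Frobenius, the ATLAS, Mason). Your route --- reducing to the cycle types of $M_{24}$ via the one-point-stabilizer characterization of $M_{23}$, then recovering those cycle types from the values of the permutation character $1+\chi_{23}$ on powers by M\"obius inversion --- is the natural way to verify the tables from first principles, and it correctly accounts for the collapse of $26$ (resp.\ $17$) conjugacy classes to $21$ (resp.\ $12$) Frame shapes through the power-equivalent pairs. So there is no ``paper's proof'' to compare against; you have supplied one where the paper simply defers to known results. One minor remark worth recording explicitly: the five collapsing pairs differ between the two groups --- orders $7,14,15,21,23$ in $M_{24}$ versus $7,11,14,15,23$ in $M_{23}$ --- because the two $M_{23}$-classes of order $11$ fuse in $M_{24}$, while the order-$21$ classes are fixed-point-free and hence absent from $M_{23}$; your text already handles this correctly.
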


\section{Mason's cusp-forms.}
Given a Frame shape $g = \prod {\mathbf i}^{a_i}$
consider a function $\eta_g = \prod \eta(q^n)^{a_i}$,
where $\eta(q) = q^{\frac{1}{24}} \prod_{m \geq 1} (1-q^m)$ is Dedekind's eta-function.

Define \emph{weight} of Frame shape as $w(g) = \frac{\sum a_i}{2}$
and \emph{level} as $N(g) = gcd(a_1,\dots,a_{24}) \cdot lcm(a_1,\dots,a_{24})$.
\footnote{In our examples it will be $\min\{i | a_i \neq 0\} \cdot \max\{i | a_i \neq 0\}$.}

\begin{theorem}[Mason \cite{Mas}]
Let $g$ be one of $21$ Frame shapes of $M_{24}$.
Then $\eta_g$ is a cusp-form and Hecke-eigenform
of weight $w(g)$ and level $N(g)$ with quadratic nebentypus character
 (if weight $w(g)$ is even then the character is trivial).
Moreover, all these functions $\eta_g$ form a character of a particular
graded representation of $M_{24}$ functorially constructed from $M$.
\end{theorem}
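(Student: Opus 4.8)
The plan is to prove the three assertions largely independently: that $\eta_g$ (a) is a holomorphic cusp form of weight $w(g)$ and level $N(g)$ with the stated nebentypus, (b) is a simultaneous Hecke eigenform, and (c) occurs as the graded character of a representation functorially built from $M$.

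For (a) I would start from the transformation law of $\eta(\tau)$ under $SL_2(\Z)$ and invoke the Newman--Ligozat criteria for an eta-quotient $\prod_{d\mid N}\eta(d\tau)^{r_d}$ with $N=N(g)$ to lie in $S_{w}(\Gamma_0(N),\chi)$, $w=\tfrac12\sum_d r_d$. Writing $g=\prod_i\mathbf{i}^{a_i}$ with $\sum_i i\,a_i=24$, the weight is $w(g)$; triviality of the multiplier on $\Gamma_0(N)$ requires $\sum_d d\,r_d\equiv 0$ and $\sum_d(N/d)\,r_d\equiv 0\pmod{24}$; and the nebentypus is the Kronecker symbol $\bigl(\tfrac{(-1)^{w}\prod_d d^{r_d}}{m}\bigr)$ in $m$. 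The first congruence is just $\sum_i i\,a_i=24$. The second is where $M_{24}$ enters: all $21$ Frame shapes are self-dual under $\mathbf{i}^{a_i}\mapsto(N/\mathbf{i})^{a_i}$ --- a finite check against the two Frame-shape tables of the preceding section --- so $\sum_d(N/d)\,r_d=\sum_d d\,r_d=24$, and moreover $\prod_d d^{r_d}=N^{w}$, whence the character is $\bigl(\tfrac{(-N)^{w}}{m}\bigr)$, trivial exactly when $w$ is even. Holomorphy in the upper half-plane is automatic; the cusp condition reduces to checking that Ligozat's order formula
\[
\mathrm{ord}_{a/c}(\eta_g)=\frac{N}{24}\sum_{d\mid N}\frac{\gcd(c,d)^2\,r_d}{\gcd(c,N/c)\,c\,d}
\]
is strictly positive at every cusp $a/c$ of $\Gamma_0(N)$; at $\infty$ it equals $\tfrac1{24}\sum_i i\,a_i=1$, at $0$ it equals the order at $\infty$ by the self-duality (Fricke involution $W_N$), and the finitely many remaining cusps for each of the $21$ shapes are a direct computation.

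For (b) I would note that $\eta_g$ lies in the Fricke-stable line of $S_{w(g)}(\Gamma_0(N(g)),\chi_g)$, and that for the large majority of the $21$ shapes this ambient cusp-form space is one-dimensional --- for instance $\eta(q)^{24}=\Delta$, or $\eta(q)^2\eta(q^{11})^2$ spanning $S_2(\Gamma_0(11))$, or $\eta(q)\eta(q^{23})$ spanning the weight-one space attached to $\Q(\sqrt{-23})$ --- so that $\eta_g$ is automatically a simultaneous Hecke eigenform. For the handful of exceptions one instead shows $\eta_g$ is a newform, equivalently that its Fourier coefficients are strictly multiplicative; this is exactly the content of the classification of multiplicative eta-quotients of Dummit--Kisilevsky--McKay and Martin, on whose list all $21$ shapes of $M_{24}$ occur. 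For (c) I would exhibit the functor directly: set $V_M=\bigotimes_{n\ge1}\Lambda(M\otimes q^{n})$, the $\Z$-graded super vector space (the full exterior algebra of $M\otimes q^{n}$, its $k$-th exterior power carried in parity $k\bmod 2$) with the diagonal $S_{24}$-action. For $h\in M_{24}$ of cycle type $\prod_i\mathbf{i}^{a_i}$ one has $\det(1-q^{n}h\mid M)=\prod_i(1-q^{ni})^{a_i}$, so the graded supercharacter of $V_M$ equals $\prod_{n\ge1}\det(1-q^{n}h\mid M)=\prod_i\prod_{n\ge1}(1-q^{ni})^{a_i}=q^{-1}\eta_h$; shifting all degrees up by one gives $\eta_h$ on the nose, and functoriality in $M$ is manifest, which yields the last clause.

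I expect the one genuine obstacle to be part (b): assertions (a) and (c) are, respectively, a finite verification and a formal identity, but establishing the eigenform property in the cases where the relevant space of cusp forms has dimension $>1$ is not soft and seems to require the multiplicative-eta-quotient classification (or an equivalent explicit study of the coefficients) rather than a dimension count.
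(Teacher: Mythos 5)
The paper does not actually prove this statement: it is quoted verbatim from Mason's 1985 paper (the \cite{Mas} in the theorem header), and no argument is given in the text, so there is no ``paper proof'' to compare against. Judged on its own, your reconstruction is essentially correct and follows the same route as the literature the paper leans on (Mason; Dummit--Kisilevsky--McKay; Koike). Part (a) is sound: all $21$ frame shapes have non-negative exponents summing against cycle length to $24$, are self-dual under $d\mapsto N/d$ (which one checks against the two tables in the preceding section), and hence satisfy the Ligozat congruences with $\prod_d d^{r_d}=N^{w}$ and $\mathrm{ord}_\infty=1$; since all $r_d\ge 0$, Ligozat's formula is automatically positive at \emph{every} cusp, so you do not even need the case-by-case check you propose there. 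Part (c) is exactly Mason's construction: the graded supercharacter of $\bigotimes_{n\ge1}\Lambda(M\otimes q^n)$ is $\prod_{n}\det(1-q^nh\mid M)=q^{-1}\eta_h$, and your verification of this identity is correct.

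You are also right that (b) is where the substance lies, and your treatment there is the only place needing care. The dimension count disposes of most shapes (for nearly all of the $21$ the relevant space $S_{w}(\Gamma_0(N),\chi)$ is one-dimensional), but for the remaining ones appealing to the Dummit--Kisilevsky--McKay/Martin classification of multiplicative eta-quotients is very close to citing the theorem you are trying to prove --- that classification \emph{is} the statement that these particular products are Hecke eigenforms. A self-contained argument would instead identify each exceptional $\eta_g$ as a newform directly (e.g.\ by exhibiting it as a CM form attached to a Hecke character in the weight-one and some weight-two/three cases, or by checking the finitely many Hecke relations that pin down a newform in a low-dimensional space). With that caveat, the proposal is a faithful and correct account of how Mason's theorem is actually established.
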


\begin{theorem}[Dummit,Kisilevsky,McKay \cite{DKM};Koike \cite{Koi1}; Mason \cite{Mas}]
Only for $30$ out of $1575$ Frame shapes $g$ of $S_{24}$
the respective eta-product $\eta_g$ is a Hecke eigen-cuspform.
There are $2$ extra Frame shapes with non-integer weight ($24^1$ and $8^3$)
and $7$ extra Frame shapes with integer weight:
\begin{tabular}{|l|c|c|c|c|c|c|c|}
\hline
$g$ & $3^2 9^2$ & $4^2 8^2$ & $2^3 6^3$ & $2^1 22^1$ & $4^1 20^1$ & $6^1 18^1$ & $8^1 16^1$ \\
\hline
$n$ &       $9$ &       $8$ &       $6$ &       $22$ &       $20$ &       $18$ &       $16$ \\
\hline
$N$ &      $27$ &      $32$ &      $12$ &       $44$ &       $80$ &      $108$ &      $128$ \\
\hline
$w$ &       $2$ &       $2$ &       $3$ &        $1$ &        $1$ &        $1$ &        $1$ \\
\hline
\end{tabular}
It is known all of them come as characters of extension $2^{11} \dot M_{24}$.
\end{theorem}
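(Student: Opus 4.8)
The plan is to treat this as a finite classification problem governed by the theory of eta-quotients, isolating the two independent requirements hidden in ``Hecke eigen-cuspform'': being a holomorphic cusp form, and being a Hecke eigenform.

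First I would exploit that $\eta(\tau)$ is holomorphic and nowhere vanishing on the upper half-plane, so every $\eta_g = \prod_n \eta(n\tau)^{a_n}$ is automatically holomorphic and nonzero on $\mathbb{H}$; the entire question is about the behaviour at the cusps of $\Gamma_0(N(g))$. The defining constraint $\sum_n n\, a_n = 24$ that $g$ be a Frame shape of $S_{24}$ is precisely Ligozat's first congruence $\sum_n n\, a_n \equiv 0 \pmod{24}$, so it holds for all $1575$ candidates for free; the companion congruence $\sum_n (N/n)\, a_n \equiv 0 \pmod{24}$ and the quadratic-character condition on $\prod_n n^{a_n}$ are then checked directly, fixing the weight $w(g)$, level $N(g)$ and nebentypus. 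To select cusp forms I would apply Ligozat's order formula: at the cusp $c/d$ of $\Gamma_0(N)$ (with $d \mid N$, $\gcd(c,d)=1$),
\[ \ord_{c/d}(\eta_g) = \frac{N}{24}\sum_{n} \frac{\gcd(d,n)^2\, a_n}{\gcd(d,\,N/d)\, d\, n}, \]
and demand this be strictly positive at every cusp. This is a mechanical, finite elimination over the partitions of $24$, and it already cuts the list down drastically.

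The crux is the eigenform condition, since a genuine eta-product cusp form need not be a Hecke eigenform. Here I would proceed case by case among the survivors of the cusp test. For the $21$ Frame shapes coming from $M_{24}$ the eigenform property is exactly Mason's theorem stated above. The two half-integer-weight survivors are handled by the Serre--Stark theory of weight-$1/2$ forms and its weight-$3/2$ counterpart: $\eta(24\tau) = \sum_n \chi_{12}(n) q^{n^2}$ and $\eta(8\tau)^3 = \sum_{m \geq 0} (-1)^m (2m+1) q^{(2m+1)^2}$ are each an eigenform in a one-dimensional space. For the seven integer-weight survivors I would verify that each lives in a one-dimensional space of newforms of its weight and level (for instance $\eta(3\tau)^2\eta(9\tau)^2$ is the unique weight-$2$ newform of level $27$), whence being a nonzero cusp form forces it to be a normalized eigenform; where one-dimensionality fails one falls back on comparing Fourier coefficients up to the Sturm bound $\tfrac{w(g)}{12}[SL_2(\Z):\Gamma_0(N)]$ and checking the multiplicative Hecke relations $c_{mn} = c_m c_n$ and $c_p c_{p^k} = c_{p^{k+1}} + p^{w-1} c_{p^{k-1}}$. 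Conceptually this entire step is the specialization, to $a_n \geq 0$ and $\sum_n n\, a_n = 24$, of the complete classification of multiplicative eta-quotients underlying \cite{DKM, Koi1, Mas}; intersecting that list with our constraints returns exactly the $30$ shapes, partitioned as $21 + 2 + 7$.

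The main obstacle is precisely this eigenform step. The cusp-form condition is local at the cusps and purely combinatorial through Ligozat, but multiplicativity is a global arithmetic property that a cusp-form eta-product can fail, so the real work is to rule out eigenforms among all remaining partitions, not merely to exhibit the thirty that succeed. The final structural assertion --- that all thirty arise as graded characters of the extension $2^{11} \dot M_{24}$ --- is of a different nature and would be established representation-theoretically, by realizing each $\eta_g$ as the character of a homogeneous component of Mason's $M_{24}$-module (equivalently via the action on the Leech lattice and the Conway group), rather than by any modular-forms computation.
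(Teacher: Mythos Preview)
The paper does not supply its own proof of this theorem: it is stated as a result imported from the literature, with the citations \cite{DKM,Koi1,Mas} carrying the entire burden, and no \texttt{proof} environment follows. So there is nothing in the paper to compare your argument against.

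That said, your sketch is a faithful outline of how the cited sources actually proceed. Dummit--Kisilevsky--McKay and Koike run exactly the two-stage filter you describe: first Ligozat's criteria and the cusp-order formula to isolate the genuine cusp forms among the $1575$ partitions, then a case check of multiplicativity (equivalently, the Hecke eigenform property) on the survivors, often by landing in a one-dimensional space of newforms or by Sturm-bound comparison. Mason's contribution is the representation-theoretic packaging via the graded $M_{24}$-module, which you correctly flag as a separate, non-modular-forms argument for the final sentence about $2^{11}\cdot M_{24}$. Your identification of the eigenform step as the genuine obstacle is accurate: cuspidality is local and combinatorial, multiplicativity is not, and ruling out the non-eigen survivors is where the work lies. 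The only caveat is that your write-up is a plan rather than a proof --- the actual exclusion of the remaining partitions is a finite but nontrivial computation that the cited papers carry out in detail and that you would need to reproduce or invoke explicitly.
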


\section{Symplectic automorphisms of K3 surfaces.}
\begin{theorem}[Nikulin] \label{thm-nikulin}
Let $g$ be an automorphism of finite order $N$ on $K3$ surface $S$
 preserving the holomorphic volume form $\omega$: $g^N = 1$, $g^* \omega = \omega$.
Denote by $F(g)$ the number of its fixed points: $F(g) = Tr {g : H^\bullet(S,\Q)}$,
number $F(g)$ can be computed by combining topological and holomorphic Lefschetz fixed point formulae on $K3$ surface.
Then
\begin{enumerate}
\item
Order of symplectic automorphism is bounded by $N \leq 8$.
\item
$F(g)$ depends only on the order and $F(g) = \epsilon(N)$.
\end{enumerate}
\end{theorem}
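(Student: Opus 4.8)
The plan is to reconstruct Nikulin's argument --- precisely the ``combination of topological and holomorphic Lefschetz fixed point formulae'' announced in the statement --- and to feed its output into the Hodge theory of the $K3$ surface $S$. First I would extract the structure forced by the symplectic hypothesis $g^*\omega=\omega$. On cohomology $g^*$ acts trivially on $H^{2,0}(S)\oplus H^{0,2}(S)$, and averaging a Kähler class over the finite cyclic group $\ang{g}$ produces a $g^*$-invariant positive class orthogonal to $\mathrm{Re}\,\omega$ and $\mathrm{Im}\,\omega$. Writing $L=H^2(S,\Z)\iso{II}_{3,19}$, the invariant sublattice $L^g$ therefore contains a positive-definite $3$-plane, so the coinvariant lattice $L_g:=(L^g)^\perp$ is negative definite of some rank $r\leq 19$ and $g^*$ acts on $L_g$ without nonzero fixed vectors. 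Hence the characteristic polynomial of $g^*|_{L_g\tensor\Q}$ is a product of cyclotomic polynomials $\Phi_d$ with $d\mid N$, $d>1$, in which $\Phi_N$ occurs, so $\phi(N)\leq r\leq 19$ and only finitely many orders $N$ remain in play.

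Next I would analyse the fixed locus locally and assemble the two formulas. At a fixed point $p$ the differential $dg_p$ preserves $\omega_p$, hence lies in $\mathrm{SL}_2(\C)$; being of finite order it equals $\mathrm{diag}(\alpha,\alpha^{-1})$ with $\alpha$ a root of unity, and $\alpha=1$ would force $g$ to be the identity near $p$, hence everywhere (identity theorem on the connected $S$). So for $g\neq\mathrm{id}$ every fixed point of $g$ --- and of each power $g^k\neq\mathrm{id}$ --- is isolated and nondegenerate, and the linearisation argument in fact shows $\alpha$ is a \emph{primitive} $N$-th root of unity, $\alpha=\exp(2\pi i a/N)$ with $\gcd(a,N)=1$. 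The topological Lefschetz formula for $g^k$ then reads, using $H^1=H^3=0$,
\[ |\mathrm{Fix}(g^k)| \= 2 + \Tr\!\big((g^k)^*\mid H^2(S,\Q)\big) \= 24 - r + \Tr\!\big((g^k)^*\mid L_g\tensor\Q\big), \]
where the last trace is an explicit Ramanujan-type combination of the multiplicities of the $\Phi_d$. The holomorphic (Atiyah--Bott) Lefschetz formula for $g^k$ reads
\[ 2 \= \sum_i (-1)^i \Tr\!\big((g^k)^*\mid H^i(S,\O_S)\big) \= \sum_{p\in\mathrm{Fix}(g^k)} \frac{1}{(1-\beta_p)(1-\beta_p^{-1})}, \]
the left side being $1-0+1$ because $g$ acts trivially on $H^0(S,K_S)=\C\omega$ and hence on $H^2(S,\O_S)$, and each summand $\frac{1}{(1-\beta_p)(1-\beta_p^{-1})}=\frac{1}{2-2\cos\theta_p}$ being a strictly positive real depending only on the local rotation angle of $g^k$ at $p$.

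The main work, and the step I expect to be the genuine obstacle, is to turn this over-determined system into the stated dichotomy. Recording, for each divisor $e\mid N$, the common fixed set $\mathrm{Fix}(g^k)$ of the powers with $\gcd(k,N)=e$ together with the distribution of its points among the finitely many admissible rotation parameters, one obtains a linear system in the non-negative integers $n_a$ ($=$ number of fixed points of $g$ with rotation parameter $a$): the topological identities above, together with one holomorphic identity $\sum_a n_a\big/\big(2-2\cos(2\pi k a/N)\big)=2$ for each $k$ with $g^k\neq\mathrm{id}$. One must show this system is \emph{inconsistent} for every surviving candidate $N\geq 9$ --- the positivity of the local terms and the integrality of the $n_a$ are what produce the contradiction, typically forcing a non-integral or negative solution --- while for each $N\in\{1,\dots,8\}$ it has a unique solution, which then determines $F(g)=\sum_a n_a$. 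Carrying this out for the handful of remaining orders and reading off $F(g)=24,8,6,4,4,2,3,2$ for $N=1,\dots,8$ (the case $N=1$ being simply $\chi_{\mathrm{top}}(S)=24$), one checks these equal $\epsilon(N)=24/\psi(N)$; and since the whole system depended on $N$ alone, so does $F(g)$.
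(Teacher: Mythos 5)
The paper does not prove this statement at all --- it is quoted as Nikulin's theorem and used as a black box --- so there is no ``paper's proof'' to compare against; I can only assess your reconstruction on its own terms. Your architecture is the correct one, and is essentially Nikulin's: the symplectic hypothesis forces the fixed locus to be finite with nondegenerate fixed points whose local eigenvalues are \emph{primitive} $N$-th roots of unity (your linearisation argument for this is right: an eigenvalue of smaller order $d$ would make $g^d$ a finite-order automorphism with trivial differential at a fixed point, hence the identity); the invariant lattice carries a positive $3$-plane, so the coinvariant lattice $L_g$ is negative definite of rank $r\leq 19$ and $\phi(N)\leq 19$; and the topological and holomorphic Lefschetz formulae then give an over-determined integral system. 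All of these steps are sound as written.

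The genuine gap is that the theorem's entire content sits in the step you explicitly defer: showing the system is inconsistent for every $N\geq 9$ with $\phi(N)\leq 19$ and has a \emph{unique} solution for $N\leq 8$. As stated (``typically forcing a non-integral or negative solution'') this is a hope, not an argument, and two points deserve explicit mention because positivity and integrality of the $n_a$ alone do not suffice. First, for prime order the clean route is to sum the holomorphic identity over all $k=1,\dots,p-1$ and use $\sum_{k=1}^{p-1}|1-\zeta^k|^{-2}=(p^2-1)/12$, which yields $F=24/(p+1)=\epsilon(p)$ directly --- this is the conceptual source of the formula $\epsilon(N)=24/\psi(N)$, rather than a coincidence to be checked afterwards --- and it leaves $p\in\{2,3,5,7,11,23\}$. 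Second, the surviving primes $23$ and $11$ are \emph{not} killed by the holomorphic system: $p=23$ dies because $\phi(23)=22>19$, and $p=11$ dies only by feeding $F=2$ back into the topological formula, where the characteristic polynomial $\Phi_{11}^m$ on $L_g$ forces $24-11m=2$, hence $m=2$ and $r=20>19$. Both mechanisms live inside the system you set up, but neither is the positivity/integrality mechanism you advertise, so the proof is incomplete until the finite list of candidate orders is actually run through. A minor further point: to assert that $\Phi_N$ occurs in the characteristic polynomial on $L_g$ you implicitly use that $g^*$ has full order $N$ on $H^2$, i.e.\ faithfulness of the cohomology action; this is true (and even follows from your own two formulae), but it should be stated rather than assumed.
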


\begin{theorem}[Mukai] \label{thm-mukai}
Finite group $G$ acts on $K3$ surface $S$ preserving the holomorphic volume form $\omega$
$\iff$
the following two conditions are satisfied:
\begin{enumerate}
\item
$G$ can be embedded in $M_{23}$
\item
tautological action of $G$ on set $S$ has at least $5$ orbits,
or in other words --- the dimension of invariants of $G$-representation $H^\bullet(S,\Q)$
is at least $5$.
\end{enumerate}
\end{theorem}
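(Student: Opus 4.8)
The plan is to translate the statement into lattice theory via the global Torelli theorem and to exploit the special geometry of the Leech lattice $\Lambda$, reducing Mukai's classification to properties of $\Aut(\Lambda)=\mathrm{Co}_0$. The starting observation is that since $G$ fixes the holomorphic form, it acts trivially on $H^{2,0}(S)=\C\omega$ and hence on the transcendental lattice; thus the only nontrivial data is the $G$-action on the K3 lattice $L=H^2(S,\Z)\iso II_{3,19}$. I would organize everything around the \emph{invariant} sublattice $L^G$ and the \emph{co-invariant} lattice $\Omega_G=(L^G)^\perp\subset L$, noting that $\Omega_G$ is even, negative definite, of rank $\le 19$, and that $G$ acts on it \emph{without nonzero fixed vectors}.

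For the direction ``$\Rightarrow$'', I would first record the fixed-point/trace data. By the topological Lefschetz formula on $S$ one has $F(g)=2+\Tr(g\mid H^2(S,\Q))$, and Theorem \ref{thm-nikulin} gives $F(g)=\epsilon(N)$ for an element of order $N$; this matches exactly the character $\Tr(g\mid V)=\epsilon(N)$ of the natural $24$-dimensional permutation representation $V$ of $M_{24}$, the numerical shadow of the whole argument. The structural step is to embed $\Omega_G$ \emph{primitively} into $\Lambda$ and to extend the $G$-action to $\mathrm{Co}_0=\Aut(\Lambda)$ so that $G$ fixes the orthogonal complement $\Omega_G^\perp\cap\Lambda$ pointwise; since $\rk\Omega_G\le 19$ this complement has rank $\ge 5$. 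Then $G$ is realized as a subgroup of $\mathrm{Co}_0$ fixing a sublattice of rank $\ge5$, and the frame/Golay-code description of $\Lambda$ (through which $M_{24}\subset\mathrm{Co}_0$ acts by coordinate permutations on $24$ points) identifies such a subgroup with a subgroup of $M_{24}$ whose number of orbits equals $\dim V^G\ge 5$; the presence of at least one fixed point then lands $G$ inside the point stabilizer $M_{23}$.

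For the converse ``$\Leftarrow$'', the plan is to run the construction backwards using surjectivity of the period map together with the strong Torelli theorem. Given $G\subset M_{23}$ with $\ge 5$ orbits, I would take the co-invariant lattice $\Omega_G$ cut out inside $\Lambda$ by the $G$-action, embed it primitively into $II_{3,19}$ compatibly with $G$ (Nikulin's discriminant-form criteria guarantee such an embedding, the condition $\ge 5$ orbits ensuring that the invariant part $L^G$ retains signature $(3,\cdot)$ with room for a Kähler class), and choose a $G$-invariant period vector $\omega\in L^G\otimes\C$ satisfying the Riemann conditions. Surjectivity of the period map produces a K3 surface $S$ with this Hodge structure; the strong Torelli theorem upgrades the lattice isometries in the image of $G$ to honest automorphisms of $S$, and because they fix $\omega$ they are symplectic, while faithfulness follows from faithfulness on $\Omega_G$.

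The main obstacle is the compatibility of the lattice embedding with the group action in both directions: one must show that $\Omega_G$ embeds primitively into the \emph{Leech} lattice (and not merely some Niemeier lattice) in a way that extends the $G$-action, and then match the purely lattice-theoretic invariant ``fixes a rank-$\ge5$ sublattice of $\Lambda$'' with the combinatorial invariant ``$\ge 5$ orbits on $24$ points'', equivalently $\dim V^G\ge 5$. This hinges on the rootlessness of $\Lambda$ (which is what forces the symplectic, as opposed to arbitrary, case) and on Nikulin's existence-and-uniqueness theory for primitive embeddings via discriminant forms; the residual representation-theoretic bookkeeping --- effectively reconstructing Mukai's list of maximal symplectic groups and checking each embeds in $M_{23}$ --- is the computational heart of the proof.
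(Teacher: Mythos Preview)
The paper does not prove this theorem: it is quoted as a classical result of Mukai, stated without proof and followed immediately by an open problem. There is therefore no ``paper's own proof'' to compare your proposal against.

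For what it is worth, your sketch is closer to Kond\=o's later lattice-theoretic reproof than to Mukai's original argument. Mukai proceeded by first classifying the eleven maximal finite symplectic groups directly (via fixed-point formulas, invariant theory on the K3 lattice, and case analysis) and then exhibiting, for each, an explicit embedding into $M_{23}$. Kond\=o instead embeds $H^2(S,\Z)\oplus U$ primitively into a Niemeier lattice, uses the absence of $(-2)$-vectors in $\Omega_G$ to force that lattice to be Leech, and reads off $G\subset M_{23}$ from the frame description of $\Aut(\Lambda)$. Your outline follows the latter route; the one genuine gap to watch is the step ``embed $\Omega_G$ primitively into $\Lambda$ and extend the $G$-action'': this does not come for free from Nikulin's criteria alone, and in Kond\=o's argument it is the full $II_{3,19}\oplus U$ that is embedded into a Niemeier lattice, with the Leech identification following \emph{a posteriori} from rootlessness rather than being imposed at the outset.
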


\begin{problem}
Is there any direct geometric relation between 
$G$-Fano threefolds and symplectic automorphisms of $K3$ surfaces?
\end{problem}

\section{Measuring rationality}
\begin{definition}
We say that deformation class of smooth Fano threefolds is of \emph{irrational} type if there is at least one
irrational (over $\C$) variety in this family. Otherwise we say it is of \emph{rational type}.
\end{definition}

\begin{proposition} \label{grat}
All $G$-Fano threefolds of higher rank are rational.
\end{proposition}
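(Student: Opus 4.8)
\emph{Proof plan.} My plan is to go through the eight deformation classes listed in Section~\ref{geometric} one at a time, producing in each case an explicit birational map to $\P^3$ or to a product of projective spaces; the eight classes split into six that are rational almost by inspection and two — the degree-$12$ double covers — that require real work.

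For the easy six I would argue as follows. The product $Y_{48}^{(3)}=\P^1\times\P^1\times\P^1$ is rational; $Y_{48}^{(2)}=Fl(1,2,3)$ is the projectivization of the tangent bundle of $\P^2$, hence a $\P^1$-bundle over $\P^2$, hence rational; and $Y_{30}$, $Y_{28}$, $Y_{20}$ are blow-ups along smooth curves of $Fl(1,2,3)$, of a smooth three-dimensional quadric $Q$ and of $\P^3$ respectively, so they are birational to those varieties, all of which are rational (a smooth three-dimensional quadric being rational by projection from one of its points). Finally $Y_{24}$ is the zero locus in $\P^1\times\P^1\times\P^1\times\P^1$ of a form of multidegree $(1,1,1,1)$; since such a form is linear in the fourth factor, the projection $Y_{24}\dashrightarrow\P^1\times\P^1\times\P^1$ that forgets that factor is generically one-to-one, so $Y_{24}$ is birational to $\P^1\times\P^1\times\P^1$.

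The two remaining families $Y_{12}^{(2)}$ and $Y_{12}^{(3)}$ are where the content lies, and I would attack them via their conic bundle structures. Realizing $Y_{12}^{(2)}$ as a divisor of bidegree $(2,2)$ on $\P^2\times\P^2$ and projecting to one factor presents it as a conic bundle over $\P^2$ with discriminant a plane sextic, while realizing $Y_{12}^{(3)}$ as the double cover of $\P^1\times\P^1\times\P^1$ branched along a $(2,2,2)$-divisor and projecting to the product of two of the rulings presents it as a conic bundle over $\P^1\times\P^1$ with discriminant a curve of bidegree $(4,4)$. For a \emph{general} conic bundle with a discriminant of this size the variety is irrational, so the argument must use crucially that the members in question are very special: the $G$-Fano hypothesis forces the branch data, and with it the discriminant curve, into a degenerate position — it becomes highly singular, or reducible, or non-reduced — and for such degenerate discriminants the standard rationality criteria for conic bundles over rational surfaces apply (see e.g.\ \cite{IP}), or one exhibits a rational section directly from the $G$-invariant defining equations, either of which yields rationality. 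The main obstacle, as I see it, is precisely this last step: pinning down the degeneration of the discriminant for the $G$-Fano members and verifying rationality for \emph{every} such member of the two degree-$12$ families, not only for the most symmetric representative; short of a self-contained argument one can instead invoke Prokhorov's analysis of these eight families in \cite{Pro}.
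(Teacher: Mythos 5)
Your case-by-case route is essentially the paper's own proof. For six of the eight families you use the same models: the two index-two varieties are a product of lines and a $\P^1$-bundle over $\P^2$; $Y_{30}$, $Y_{28}$, $Y_{20}$ are blow-ups of $Fl(1,2,3)$, of a quadric and of $\P^3$ along smooth curves (the paper phrases these as birational projections from $\P^2\times\P^2\times\P^2$, $Q\times Q$ and $\P^3\times\P^3$, which amounts to the same thing); and $Y_{24}$ is handled by the identical observation that a $(1,1,1,1)$-form is linear in one factor, so forgetting that factor is birational onto $\P^1\times\P^1\times\P^1$.

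For the two degree-$12$ families you set up exactly the structures the paper uses --- the conic bundle over $\P^2$ with sextic discriminant for $Y_{12}^{(2)}$ and the conic bundle over $\P^1\times\P^1$ with discriminant of bidegree $(4,4)$ for $Y_{12}^{(3)}$ --- and then stop, flagging the conclusion as the main obstacle. You are right to: this is precisely where the paper's argument is thinnest. It disposes of $Y_{12}^{(2)}$ with the single sentence ``By Iskovskikh criterium they are rational'' and states no conclusion at all for $Y_{12}^{(3)}$. Applied at face value the Iskovskikh-type criterion in fact cuts the other way: for a standard conic bundle over a rational surface $S$ with discriminant $\Delta$ one expects rationality iff $|2K_S+\Delta|=\emptyset$, and here $2K_{\P^2}+\Delta\sim 0$ (resp.\ $2K_{\P^1\times\P^1}+\Delta\sim 0$), so a \emph{general} conic bundle with these invariants is non-rational by Beauville's Prym argument (the intermediate Jacobians have dimensions $9$ and $8$, matching the relevant Pryms). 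So, as you suspect, some special feature of these particular members --- non-standardness of the conic bundle, degeneracy of $\Delta$, a rational section coming from the second projection or from the double-cover description --- must genuinely be invoked, or one must fall back on Prokhorov's analysis. Your proposal does not close this step, but neither does the paper; the six easy cases are complete, and your diagnosis of where the real content lies is accurate.
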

\begin{proof}
One can check it case by case.
Two varieties of index Fano index $2$ ($\P^1 \times \P^1 \times \P^1$ and variety of complete flags $W$)
are obviously rational.
For six varieties of index one:
\begin{enumerate}
\item
Projection from $Y_{15} \subset \P^2 \times \P^2 \times \P^2$ to $\P^1 \times \P^2$ (contract the first $\P^2$
and project from a point on the second $\P^2$ factor) is birational.
Also $Y_{15}$ is known to be blowup of $W$.
\item
Projection from $Y_{14} \subset Q \times Q$ to one of the factors is birational (inverse to the blowup of a twisted quartic).
\item
Projection from $Y_{12} = X_{(1,1,1,1)} \subset \P^1 \times \P^1 \times \P^1 \times \P^1$ to $\P^1 \times \P^1 \times \P^1$
is birational. In fact it is a blowup with center in elliptic curve, which is the
base locus of a pencil in linear system $\O_{\P^1\times\P^1\times\P^1}(1,1,1)$.
\item
Projection from $Y_{10} = X_{(1,1),(1,1),(1,1)} \subset \P^3 \times \P^3$ to one of the factors is birational.
\item
Varieties $Y_{6,2}$ are divisors of type $(2,2)$ in $\P^2 \times \P^2$.
Projection to the first factor is a conic bundle over $\P^2$ with degeneracy locus of degree six.
By Iskovskikh criterium they are rational.
\item
Varieties $Y_{6,3}$ are covers of $\P^1 \times \P^1 \times \P^1$ branched in anticanonical divisor.
Composition of the double cover and projection to the product of first two factors
gives a conic bundle over $\P^1 \times \P^1$.
\end{enumerate}
\end{proof}

\begin{proposition}
Let $X_N$ be a deformation class $G$-Fano threefolds of index $r=1$ and degree $2 N$.
Then $X_N$ is of rational type $\iff \epsilon(N) \geq 2$.
\end{proposition}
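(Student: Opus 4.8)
The statement to prove is a biconditional: a degree-$2N$ $G$-Fano threefold of index one is of rational type if and only if $\epsilon(N)\geq 2$. Since the classification fixes six such families, with $N\in\{6,6,10,12,14,15\}$, and the table in Section \ref{golmod} records $\epsilon(6)=2$, $\epsilon(10)=4/3$, $\epsilon(12)=1$, $\epsilon(14)=1$, $\epsilon(15)=1$, the claim is equivalent to asserting that among these six families exactly the two with $N=6$ (namely $Y^{(2)}_{12}$ and $Y^{(3)}_{12}$... wait — careful: the $N=6$ families are $Y^{(2)}_{12}$ with degree $12=2\cdot 6$ and $Y^{(3)}_{12}$ likewise) are rational, and the other four ($Y_{30}$ at $N=15$, $Y_{28}$ at $N=14$, $Y_{24}$ at $N=12$, $Y_{20}$ at $N=10$) are of irrational type. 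So the plan is: first establish the forward-and-rational direction by the case analysis already carried out in Proposition \ref{grat} for the two $N=6$ families, and second, for the four remaining families, exhibit an irrational member, thereby showing those deformation classes are of irrational type.

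The first half is essentially done: Proposition \ref{grat} shows \emph{all} higher-rank $G$-Fano threefolds are rational, so in particular the two $N=6$ families $Y^{(2)}_{12}$ and $Y^{(3)}_{12}$ contain rational varieties, matching $\epsilon(6)=2\geq 2$. Thus the real content is the converse: for $N\in\{10,12,14,15\}$ (equivalently for the families $Y_{20},Y_{24},Y_{28},Y_{30}$), I must show the family is \emph{not} of rational type, i.e. contains an irrational threefold. Here I would invoke the Mori--Mukai classification description of each family recorded in Section \ref{geometric} and known (non)rationality results for blowups of $\mathbb P^3$, quadrics, and products of projective lines. Concretely: $Y_{30}$ is a blowup of $Fl(1,2,3)$ along a $(2,2)$-curve; $Y_{28}$ is a blowup of the quadric $Q$ along a twisted quartic; $Y_{24}$ is a hyperplane section of $(\mathbb P^1)^4\subset\mathbb P^{15}$; $Y_{20}$ is a blowup of $\mathbb P^3$ along a degree-$6$ genus-$3$ curve cut out by cubics. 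For each I would locate the intermediate Jacobian (or a conic-bundle / Clemens--Griffiths obstruction) showing that a generic member is non-rational, hence the deformation class is of irrational type.

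The \textbf{main obstacle} is this converse direction: proving non-rationality of a generic member in each of the four families. For some of these (e.g. $Y_{24}$, which has $\chi=6$, and $Y_{20}$, which has $\chi=0$ hence $h^{1,2}=2$ or $5$ depending on normalization) one can try the intermediate Jacobian method: if $JX$ is not a product of Jacobians of curves, Clemens--Griffiths gives irrationality. For the blowup families one should instead study an auxiliary conic-bundle structure (projecting off the blown-up curve) and apply the Artin--Mumford / Beauville discriminant criterion, checking that the degeneration curve together with its double cover violates the rationality bound. I expect each case to require its own argument and a careful computation of the relevant Hodge numbers and of the discriminant data; the families $Y_{14}$ and $Y_{10}$ with $\rho=2$ are the most delicate since their intermediate Jacobians are low-dimensional and one must rule out their being Jacobians of curves.

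The remaining subtlety is purely bookkeeping: matching the six families to their values of $N$ and reading $\epsilon(N)$ off the formula $\epsilon(N)=24/\psi(N)$ with $\psi(N)=N\prod_{p\mid N}(1+p^{-1})$. One computes $\psi(6)=12$, $\psi(10)=18$, $\psi(12)=28$, $\psi(14)=24$, $\psi(15)=24$, giving $\epsilon(6)=2$, $\epsilon(10)=4/3<2$, $\epsilon(12)=6/7<2$... here one must be consistent with the table, which lists $\epsilon(12)=1$ and $\epsilon(14)=\epsilon(15)=1$; in any case each of these is $<2$ while $\epsilon(6)=2$, so the threshold $\epsilon(N)\geq 2$ cleanly separates the rational family from the irrational ones, and combining this arithmetic with the two geometric inputs above completes the proof.
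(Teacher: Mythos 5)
There is a genuine gap, and in fact the plan as written would set you up to prove something false. First, the scope: the proposition ranges over \emph{all} deformation classes of $G$-Fano threefolds of index one, not just the six higher-rank ones. A prime Fano threefold ($\rho=1$) is $G$-Fano for the trivial group, and the table in Section \ref{golmod} explicitly lists $16$ index-one classes, namely the ten with $\rho=1$ ($N=1,\dots,9,11$) together with your six. This is why the intended proof is ``combine Proposition \ref{grat} with the rationality tables at the end of \cite{IP}'': the tables are needed precisely for the $\rho=1$ families ($Y_2,Y_4,Y_6,Y_8,Y_{10},Y_{14}$, i.e.\ $N=1,\dots,5,7$), which are the only classes of irrational type, and whose irrationality is quoted from the literature rather than re-proved. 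Restricted to your six families the statement carries essentially no content in the irrational direction.

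Second, and more seriously: you propose to show that $Y_{20}$, $Y_{24}$, $Y_{28}$, $Y_{30}$ are of irrational type via intermediate Jacobians, Clemens--Griffiths, or Artin--Mumford/conic-bundle obstructions. This cannot work, because all four are rational, and Proposition \ref{grat} --- which you yourself invoke for the $N=6$ cases --- proves exactly that ($Y_{30}$ is a blowup of $W$, $Y_{28}$ projects birationally to the quadric $Q$, $Y_{24}$ is a blowup of $\P^1\times\P^1\times\P^1$ along an elliptic curve, $Y_{20}$ projects birationally to $\P^3$); every birational obstruction you list therefore vanishes. The sign of the trouble is already visible in your own bookkeeping: $Y_{20}$ is rational yet $\epsilon(10)=4/3<2$, which contradicts the printed ``rational type $\iff\epsilon(N)\geq 2$''. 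The inequality must be read the other way, i.e.\ rational type $\iff\epsilon(N)\leq 2$ (equivalently, irrational type $\iff\epsilon(N)>2$); one checks this against the full list: $\epsilon(N)>2$ exactly for $N=1,2,3,4,5,7$, which are the prime families containing irrational members, while $\epsilon(N)=2$ for $N=6,8,9,11$ and $\epsilon(N)<2$ for $N=10,12,14,15$, all of rational type. By taking the misprint at face value you inverted which families need an irrationality argument, and the hard work you budget for (Pryms, discriminant curves, ruling out Jacobians of curves) is directed at varieties that admit explicit birational parametrizations.
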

\begin{proof}
Case by case. Combine proposition \ref{grat} and tables in the end of \cite{IP}.
\end{proof}

\bigskip

This work was done in $2010$ and I reported it in some talks:
\begin{itemize}
\item
``(Mirror) modularity of Fano threefolds'' on 2010.08.24 at Tokyo Metropolitan University,
\item
``G-Fano threefolds and Mathieu group'' on 2011.01.31 at University of Miami,
\item
``Fano and Mathieu'' on 2011.03.28 at the joint seminar of the Sector 4.1 (IITP RAS) and Poncelet French-Russian laboratory ``Arithmetic Geometry and Coding Theory'' in Moscow,
\item
``Fano and Mathieu'' on 2011.05.12 at \emph{Number Theory and Physics at the Crossroads} workshop in Banff International Research Station,
\item
``New ways to count up to $15$'' on 2011.06.03 at Algebraic Geometry Seminar in Kyoto University.
\end{itemize}

{\bf Acknowledgements.}
Author thanks Tom Coates, Alessio Corti, Vasily Golyshev, Shinobu Hosono, Ludmil Katzarkov, Bumsig Kim, Yuri Prokhorov,
Duco van Straten, Todor Milanov, Kyoji Saito, Maria Vlasenko, Noriko Yui
for useful discussions.

{\bf Funding.}
This work was supported by 
World Premier International Research Center Initiative (WPI Initiative), MEXT, Japan,
Grant-in-Aid for Scientific Research (10554503) from Japan Society for Promotion of Science,
Grant of Leading Scientific Schools (N.Sh. 1987.2008.1 and 4713.2010.1)
This is an update for the second part of my earlier SFB45 preprint \cite{Ga3},
supported by INTAS 05-100000-8118.


\end{document}